\newtheorem{Theorem}{Theorem}
\newtheorem{cor}[subsection]{Corollary}
\newtheorem{lemma}[subsection]{Lemma}
\theoremstyle{definition}  
\newtheorem{example}[subsection]{Example}
\newtheorem{remark}[subsection]{Remark}
\newcommand{ \calO}{{\mathcal O}}
\newcounter{theoremCounter}
\begin{document}

\title{On the 2-rank and 4-rank of the class group of some real pure quartic number fields}

\author{Mbarek Haynou}
\author{Mohammed Taous}

\address{Moulay Ismail University of Meknes. Faculty of Sciences and Technology\\
	P.O. Box 509-Boutalamine, 52 000 Errachidia}
\email{haynou\_mbarek@hotmail.com}
\email{taousm@hotmail.com}

\subjclass[2010]{11R11, 11R16, 11R29, 11R37}
\keywords{class groups, pure quartic number field, ambiguous class number formula.}
\begin{abstract}
Let $K=\mathbb{Q}(\sqrt[4]{pd^{2}})$ be a real pure quartic number field and $k=\mathbb{Q}(\sqrt{p})$  its real quadratic subfield, where $p\equiv 5\pmod 8$ is a prime integer and  $d$ an odd square-free integer coprime to $p$. In this work, we calculate  $r_2(K)$, the  $2$-rank of the class group of $K$, in terms of the  number of prime divisors of $d$ that decompose or remain inert in $\mathbb{Q}(\sqrt{p})$, then we will deduce forms
of $d$ satisfying $r_2(K)=2$. In the last case,  the $4$-rank of the class group of $K$ is given too.
\end{abstract}
\maketitle
\section{\large \bf{Introduction and Notations}}
\label{introduction}
Let $K$ be a number field and  $Cl_2(K)$  its  $2$-class group, that is the $2$-Sylow subgroup of its class group $Cl(K)$.  We define  the $2$-rank and the $4$-rank of $Cl(K)$  respectively as follows: $r_2(K)=\dim_{\mathbb{F}_2}(Cl_2(K)/Cl_2(K)^2)$ and $r_4(K)=\dim_{\mathbb{F}_2}(Cl_2(K)^2/Cl_2(K)^4)$
where $\mathbb{F}_2$ is the finite field with $2$ elements.

Using  genus theory, many mathematicians  calculated $r_2(K)$ and $r_4(K)$ whenever  $K$ is a  number field having a subfield with odd class number. For instances, we mention the following works:
\begin{itemize}
	\item [$ \bullet $]	For biquadratic number fields, we indicate the works  \cite{AT08, AM01, AM04, ATZ18, Q09, T08}$\dots$.
	\item [$ \bullet $]	For cyclic quartic fields, we indicate the works \cite{Pa77, Pa78}$ \dots $.
	\item [$ \bullet $]	For pure quartic number fields, we indicate the works of Parry \cite{N5, P80}$ \dots $.
\end{itemize}
In the two last papers, Parry determined the exact power of $2$ dividing the class number of some pure quartic number fields.

In this paper, we consider the real pure quartic number fields  $K=\mathbb{Q}(\sqrt[4]{pd^{2}})=k(\sqrt{d\sqrt{p}})$, where $k=\mathbb{Q}(\sqrt{p})$  with $p \equiv 5 \pmod 8$ is a prime integer and $d$ is an odd square-free integer satisfying some conditions.

Our first goal is to determine the $2$-rank of the class group of $K$,  using the ambiguous class number formula in $K/k$ (\cite{C33}):
\begin{equation*}\label{CNF1}
\#\mathcal{A}m(K/k) = h(k) \cdot \frac{2^{t-1}}{ [E_k:E_k \cap N_{K/k}(K^\times)]},
\end{equation*}

where
 \begin{itemize}
 	 \item [$ \bullet $]$t$ is the number of finite and infinite
 	 primes ramified in $K/k$,
 	\item [$ \bullet $]  $E_k$ is the unit group of $k$,
 	\item [$ \bullet $] $E_k \cap N_{K/k}(K^\times)$ is the subgroup of units that are norms of elements of $K$,
 	\item [$ \bullet $] $h(k)$ is the class number of $k$.
 \end{itemize}
 As $h(k)$ is odd, then the above formula implies that
 \begin{equation}\label{ACNF2}
 r_2(K) = t - e - 1,
 \end{equation}
 with $e$ is an integer defined by $2^{e} = [E_k:E_k \cap N_{K/k}(K^\times)]$.
Our second goal is to calculate the $4$-rank of the class group of fields $K$ satisfying  $r_{2}(K)=2$. For this, we will use  a formula provided by  Y. Qin (\cite{Q09}). This formula states that  the $4$-rank of the class group of  a quadratic extension $ k(\sqrt{\delta})$, where the base  field $k$ is of odd class number, is given by:
\begin{equation}\label{4RF}
 r_{4}(k(\sqrt{\delta}))= t-1-\mathrm{rank(}R_{k(\sqrt{\delta})/k}),
\end{equation} where $t$ is the number of ramified
primes in $k(\sqrt{\delta})/k$ and $R_{k(\sqrt{\delta})/k}$ is the following matrix:
\begin{equation}\label{RMG}
 R_{k(\sqrt{\delta})/k}=\left(\begin{array}{llll}
 \left(\dfrac{a_{1};\,\delta}{\mathcal{P}_{1}}\right) &$\dots$\left(\dfrac{a_{n};\,\delta}{\mathcal{P}_{1}}\right) &$\dots$  \left(\dfrac{a_{n+r};\,\delta}{\mathcal{P}_{1}}\right)\\
 & \dots & \dots \\
 \left(\dfrac{a_{1};\,\delta}{\mathcal{P}_{t}}\right) &$\dots$\left(\dfrac{a_{n};\,\delta}{\mathcal{P}_{t}}\right) &$\dots$  \left(\dfrac{a_{n+r};\,\delta}{\mathcal{P}_{t}}\right)
 \end{array} \right).
\end{equation}

It is a matrix of type $t\times(n+r)$ with coefficients in $\mathbb{F}_2$, called the generalized R\'edei-matrix,
where $(\mathcal{P}_{i})_{1\leq i\leq t}$ are the primes (finite and infinite) of $k$ which ramify in $ k(\sqrt{\delta})$, $(a_{j})_{1\leq j\leq n+r}$ is a family of elements of $k$  defined  by Y. Qin in \cite[{\textsection}2,  p. 27]{Q09} and $\left(\dfrac{-;\,\delta}{\mathcal{P}_{i}}\right)$ is the Hilbert symbol on $k$. Note that this matrix is given by replacing the 1's by 0's and the -1's by 1's.
For more details concerning the generalized R\'edei-matrix,  we refer the reader to \cite{Q09}.

During this paper, we adopt the following notations:
\begin{itemize}
\item $k=\mathbb{Q}(\sqrt{p})$  with $p \equiv 5 \pmod 8$ is a prime integer.
\item For $z \in k$,  $z'$ denotes the conjugate of $z$  over $\mathbb{Q}$.
\item $K=\mathbb{Q}(\sqrt[4]{pd^{2}})=k(\sqrt{d\sqrt{p}})$, where $d$ is an odd square-free integer such that $(p, d)=1$.
	\item $\mathcal{O}_k$:  the ring of integers of $k$.
	\item $r_{2}(K)$:  the $2$-rank of the class group $Cl(K)$.
	\item $r_{4}(K)$:  the $4$-rank of the class group $Cl(K)$.
	\item $\delta=d\sqrt{p}$.
	\item $\mathcal{P}_{i}$:  a prime ideal of $k$ which ramifies in $K$.
	\item $N_{K/k}(-)$:  the relative norm of $K$ to $k$.
	\item $R_{ K/k}$:  the generalized R\'edei-matrix of $K$.
	\item $\left(\frac{m}{n}\right)$:  the Legendre symbol.
	\item $\left[\frac{-}{\mathcal{P}_{i}}\right]$:  the quadratic symbol over $k$.
	\item $\left(\frac{ -, -}{\mathcal{P}_{i}}\right)$:  the Hilbert symbol over $k$.             			
	\item $\left(\frac{m}{n}\right)_{4}$:  the rational biquadratic symbol.
	\item  $\varepsilon_{p} $:  the fundamental unit of $k$.
	
\end{itemize}


Our main theorems are as follows. Their proofs will be given in Sections \ref{hay2} and \ref{hay1}. 
\begin{Theorem} \label{Tha} Suppose that $d = q_{1}\cdots q_{s}q'_{1}\cdots q'_{t}$ is an odd square-free integer such that  $q_{i}$ and $q'_j$ are distinct primes satisfying $\left(\frac{p}{q_i}\right)=-1$ and $\left(\frac{p}{q'_{j}}\right)=1$ for each $i$, $j$. Then the $2$-rank of $Cl(K)$ is
	$r_{2}(K)=2t+s.$
\end{Theorem}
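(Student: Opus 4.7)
The plan is to apply the ambiguous class number formula~\eqref{ACNF2}, $r_2(K) = T - e - 1$, where $T$ is the number of primes of $k$ (finite and infinite) ramified in $K/k$ and $2^e = [E_k : E_k \cap N_{K/k}(K^\times)]$. The theorem then reduces to establishing $T = 2t + s + 3$ and $e = 2$.

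First I would count the ramified primes in $K = k(\sqrt{\delta})$ with $\delta = d\sqrt{p}$: every prime $\mathcal{P}$ of $k$ above $p$, above an inert $q_i$, or above a split $q'_j$ has $v_\mathcal{P}(\delta) = 1$, so the unique prime $\mathfrak{p}$ above $p$, the $s$ primes above the $q_i$, and the $2t$ primes above the $q'_j$ all ramify in $K/k$. For the prime $\mathfrak{p}_2 = 2\mathcal{O}_k$ (inert because $p \equiv 5\pmod 8$), I would argue that $\delta$ is never a local square: if $\alpha^2 = d\sqrt{p}$ in $k_{\mathfrak{p}_2}$, applying the non-trivial automorphism $\sigma$ of $k_{\mathfrak{p}_2}/\mathbb{Q}_2$ gives $\sigma(\alpha)^2 = -\alpha^2$, forcing $-1$ to be a square in $k_{\mathfrak{p}_2}$, which contradicts the fact that $\mathbb{Q}_2(\sqrt{-1})$ is ramified over $\mathbb{Q}_2$ whereas $k_{\mathfrak{p}_2} = \mathbb{Q}_2(\sqrt{p})$ is unramified. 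The real place $\infty_2$ (sending $\sqrt{p}\mapsto -\sqrt{p}$) makes $\delta$ negative and so ramifies, while $\infty_1$ does not. In total $T = 1 + s + 2t + 1 + 1 = 2t + s + 3$.

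Next I would compute $e$ via Hasse-type local obstructions on $E_k/E_k^2 = \langle -1, \varepsilon_p\rangle$; here $N_{k/\mathbb{Q}}(\varepsilon_p) = -1$ since $p \equiv 1\pmod 4$. Both $-1$ and $\varepsilon_p$ are negative at $\infty_2$ (for $\varepsilon_p$, use $\varepsilon_p' = -1/\varepsilon_p < 0$), and $\delta$ is also negative there, so $\left(\frac{-1,\delta}{\infty_2}\right) = \left(\frac{\varepsilon_p,\delta}{\infty_2}\right) = -1$ and neither is a global norm. For the third class $-\varepsilon_p$ this $\infty_2$-obstruction disappears, and I would locate the obstruction at $\mathfrak{p}$. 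The pivotal identity: since $\mathfrak{p}$ is totally ramified in $k/\mathbb{Q}$, the reductions $\bar\varepsilon_p$ and $\bar\varepsilon_p'$ coincide in $\mathcal{O}_k/\mathfrak{p} \cong \mathbb{F}_p$, so $\bar\varepsilon_p^{\,2} \equiv N_{k/\mathbb{Q}}(\varepsilon_p) \equiv -1 \pmod p$; consequently $\left(\frac{\bar\varepsilon_p}{p}\right) = (-1)^{(p-1)/4} = -1$, precisely because $p \equiv 5\pmod 8$. Combined with $\left(\frac{-1,\sqrt p}{\mathfrak p}\right) = \left(\frac{-1}{p}\right) = 1$ and the fact that $d$ is a unit at $\mathfrak{p}$, this yields $\left(\frac{-\varepsilon_p,\delta}{\mathfrak p}\right) = -1$. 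Hence $e = 2$ and $r_2(K) = (2t+s+3) - 2 - 1 = 2t + s$.

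The main difficulty I anticipate is the local analysis at $\mathfrak{p}_2$: forcing ramification there for every admissible $d$ requires controlling the square classes in the unramified quadratic extension of $\mathbb{Q}_2$, and the hypothesis $p \equiv 5\pmod 8$ enters crucially in two places --- once via the absence of $\sqrt{-1}$ in $k_{\mathfrak{p}_2}$ to ensure this ramification, and once via the sign $(-1)^{(p-1)/4} = -1$ that drives the Hilbert-symbol computation at $\mathfrak{p}$ ruling out $-\varepsilon_p$ as a norm.
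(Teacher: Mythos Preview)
Your overall strategy coincides with the paper's: apply formula~\eqref{ACNF2}, count $T=2t+s+3$ ramified places, and show $e=2$ by exhibiting Hilbert-symbol obstructions to $-1,\varepsilon_p,-\varepsilon_p$ being norms. Your computation of $e$ is correct and in fact slightly more self-contained than the paper's: where the paper cites \cite[Lemma~V]{N5} for $\bigl[\tfrac{\varepsilon_p}{\sqrt p}\bigr]=-1$, you derive it directly from $\bar\varepsilon_p^{\,2}\equiv N_{k/\mathbb Q}(\varepsilon_p)\equiv -1\pmod p$ and $(-1)^{(p-1)/4}=-1$. You also locate the obstruction for $\varepsilon_p$ at the infinite place rather than at $(\sqrt p)$; both work.

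There is, however, one genuine gap in your treatment of the prime above $2$. Showing that $\delta=d\sqrt p$ is not a square in $k_{\mathfrak p_2}$ only rules out \emph{splitting}; it does not by itself force ramification, since $\mathfrak p_2$ could still be inert. Your Galois-conjugation idea does finish the job, but you need one more step: if $k_{\mathfrak p_2}(\sqrt\delta)/k_{\mathfrak p_2}$ were unramified, it would coincide with the unramified quartic extension of $\mathbb Q_2$, which is cyclic; letting $\tau$ generate its Galois group, the same computation $\tau(\sqrt\delta)^2=-\delta$ forces $\sqrt{-1}$ into that extension, contradicting the fact that every unramified extension of $\mathbb Q_2$ is contained in $\mathbb Q_2^{\mathrm{nr}}$ and hence contains no primitive $4$th root of unity. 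The paper avoids this local analysis entirely by invoking the relative discriminant $\Delta_{K/k}=(4d\sqrt p)$ from \cite{H92}, which makes the ramification of $2_I$ immediate.
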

\begin{Theorem} \label{Thb}
	If $d=q$ is an odd prime such that $\left(\frac{p}{q}\right)=1$. Then
	\begin{enumerate}[{\indent\rm(1)}]
		\item If $q\equiv 1 \pmod 8$, then
		$r_{4}(K)=\begin{cases} 0, & \text{if $\left(\frac{ p }{q}\right)_{4}=-1 $,} \\
		1, & \text{if $\left(\frac{ p }{q}\right)_{4}=-\left(\frac{ q }{p}\right)_{4}=1 $,} \\
		2 ,& \text{if $\left(\frac{ p }{q}\right)_{4}=\left(\frac{ q }{p}\right)_{4}=1$.}
		\end{cases}    $
		\item If $q\equiv 3 \pmod 8$, then	$  r_{4}(K)=0 \hspace{12em}  $
		\item If $q\equiv 5 \pmod 8$, then
		$r_{4}(K)=\begin{cases} 0, & \text{if  $\left(\frac{ p }{q}\right)_{4}=-1 $,} \\
		1, & \text{if $\left(\frac{ p }{q}\right)_{4}=1$.}
		\end{cases}   \hspace{4em}  $
		\item If $q\equiv 7 \pmod 8$, then
		$r_{4}(K)=\begin{cases} 0, & \text{if $\left(\frac{ q }{p}\right)_{4}=1 $,} \\
		1, & \text{if $\left(\frac{ q }{p}\right)_{4}=-1 $.}
		\end{cases} \hspace{4em}$
	\end{enumerate}			
		
	\end{Theorem}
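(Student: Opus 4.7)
The plan is to apply Qin's formula \eqref{4RF} to the quadratic extension $K=k(\sqrt{\delta})$ with $\delta=q\sqrt p$, taking advantage of the value $r_2(K)=2$ provided by Theorem \ref{Tha}; this already forces $r_4(K)\leq 2$, consistent with the announced answers. The first step is to make the ramification of $K/k$ completely explicit. Since $p\equiv 5\pmod 8$, we have $(p)=\mathfrak{p}^2$ in $\mathcal O_k$ with $\mathfrak{p}=(\sqrt p)$, the rational prime $2$ is inert in $k$, and $N_{k/\mathbb{Q}}(\varepsilon_p)=-1$. Because $\left(\frac{p}{q}\right)=1$, $q$ splits as $(q)=\mathfrak{q}_1\mathfrak{q}_2$, and each of $\mathfrak{p},\mathfrak{q}_1,\mathfrak{q}_2$ divides $(\delta)$ with valuation one, hence ramifies in $K/k$; a sign analysis at the two real embeddings of $k$ produces one further ramified infinite place. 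Whether the inert prime $\mathfrak{P}_2=(2)$ also ramifies must be decided locally, by studying the image of $\delta$ in $k_{\mathfrak{P}_2}^{\times}/(k_{\mathfrak{P}_2}^{\times})^2$, and this turns out to depend on $q\bmod 8$. Combining this count with the identity $t=r_2(K)+e+1=3+e$ coming from \eqref{ACNF2}, together with a direct computation of the unit index $e=[E_k:E_k\cap N_{K/k}(K^{\times})]$ (using $N_{k/\mathbb{Q}}(\varepsilon_p)=-1$), then pins down $t$ in every case.

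Next I would write the generalized R\'edei matrix $R_{K/k}$ following \cite[{\textsection}2]{Q09}: pick the system of generators $(a_j)$ of $k$ attached to the ramified primes together with the required unit contribution, and tabulate the Hilbert symbols $\left(\frac{a_j,\delta}{\mathcal{P}_i}\right)$ at all ramified $\mathcal{P}_i$. At the split primes $\mathfrak{q}_1,\mathfrak{q}_2$, the standard formula for the Hilbert symbol at an odd prime reduces the relevant entries to Legendre symbols modulo $q$ that ultimately detect the biquadratic symbol $\left(\frac{p}{q}\right)_4$; symmetrically, at $\mathfrak{p}$ (residue field $\mathbb{F}_p$) they detect $\left(\frac{q}{p}\right)_4$. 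The archimedean entry is read off from signs, and the entry at $\mathfrak{P}_2$, when present, is governed by a $2$-adic congruence in $q$. A row reduction of the resulting $t\times(n+r)$ matrix over $\mathbb{F}_2$, performed separately for each residue class of $q$ modulo $8$, then gives $\mathrm{rank}(R_{K/k})$, and substitution in \eqref{4RF} yields $r_4(K)=t-1-\mathrm{rank}(R_{K/k})$.

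The main obstacle I anticipate is the analysis at the inert prime $\mathfrak{P}_2$: deciding its ramification in $K/k$ and evaluating the corresponding $2$-adic Hilbert symbols requires working inside the unramified quadratic extension of $\mathbb{Q}_2$, where the structure of unit squares is noticeably more delicate than over $\mathbb{Q}_2$. It is precisely this $2$-adic behaviour that generates the four distinct statements as $q$ varies modulo $8$, and explains why the maximal value $r_4(K)=2$ can appear only when $q\equiv 1\pmod 8$: only in that range of congruences do both biquadratic symbols $\left(\frac{p}{q}\right)_4$ and $\left(\frac{q}{p}\right)_4$ enter the rank of $R_{K/k}$ independently, while for $q\equiv 3\pmod 8$ the $2$-adic contribution forces $R_{K/k}$ to be of maximal rank and hence $r_4(K)=0$, and for $q\equiv 5,7\pmod 8$ only one of the two biquadratic symbols survives as a free parameter.
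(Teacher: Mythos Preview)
Your overall strategy coincides with the paper's: apply Qin's formula \eqref{4RF}, list the ramified primes, build the generalized R\'edei matrix, compute its rank case by case. However, one concrete point in your outline is wrong and would derail the computation if carried through.

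You assert that whether the inert prime $\mathfrak{P}_2=(2)$ ramifies in $K/k$ ``turns out to depend on $q\bmod 8$''. It does not. The relative discriminant of $K/k$ is $(4d\sqrt{p})$ (see \cite{H92}), so the factor $4$ forces $2_I=2\mathcal{O}_k$ to ramify regardless of $q$. Hence the ramified set is always $\{\pi_1,\tilde{\pi}_1,2_I,(\sqrt p),p_\infty\}$ and $t=5$ in every case; this is also what the identity $t=r_2(K)+e+1$ gives once you check (as in Lemma~\ref{SHL1}) that $-1,\varepsilon_p,-\varepsilon_p$ are all non-norms, so $e=2$ uniformly. The four-way split according to $q\bmod 8$ therefore does \emph{not} come from a varying $t$; it enters only through the values of the Hilbert symbols themselves, chiefly via $\left(\frac{2}{q}\right)$ and $\left(\frac{-1}{q}\right)$, and through whether the formula for $\left(\frac{x}{q}\right)$ in Lemma~\ref{SHL3} returns $\left(\frac{p}{q}\right)_4$ or $-\left(\frac{q}{p}\right)_4$.

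A second point you should not gloss over: the columns $a_j$ attached to the split primes $\pi_1,\tilde{\pi}_1$ are not immediate, because these ideals need not be principal. The paper takes generators of $\pi_1^{h},\tilde{\pi}_1^{h}$ of the form $(x\pm y\sqrt p)/2$ with $x^2-py^2=4q^h$, and the key arithmetic input (Lemma~\ref{SHL3}) is the evaluation of $\left(\frac{x}{p}\right)$ and $\left(\frac{x}{q}\right)$ in terms of the biquadratic symbols $\left(\frac{q}{p}\right)_4$, $\left(\frac{p}{q}\right)_4$. This is where the biquadratic symbols actually enter the matrix; your sentence ``the standard formula for the Hilbert symbol \ldots\ ultimately detect[s] the biquadratic symbol'' hides a genuine lemma that requires proof.
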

    \begin{Theorem} \label{Thc}
    If $d=q_{1}q_{2}$ where $q_{1}, q_{2}$ are odd prime integers
	such that $\left(\frac{p }{q_{1}}\right)=\left(\frac{p }{q_{2}}\right)=-1$,
	then
	$$  \hspace{6.5em} r_{4}(K)=\begin{cases} 1, & \text{if $(q_{1}, q_{2})\equiv (3, 3) \pmod 4 $,} \\
	0, & \text{otherwise.}
	\end{cases} $$		
    \end{Theorem}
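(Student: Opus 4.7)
Applying Theorem~\ref{Tha} with $s=2$ and $t=0$ (in the notation of that theorem) immediately yields $r_2(K)=2$, so the substance of Theorem~\ref{Thc} is the evaluation of $r_4(K)$ through Qin's formula \eqref{4RF}. Set $\delta=q_1q_2\sqrt{p}$. The first step is to locate the primes of $k$ ramified in $K/k$: since $\left(\frac{p}{q_i}\right)=-1$, each $q_i$ is inert in $k$, so $\mathcal{Q}_i:=q_i\mathcal{O}_k$ is prime with $v_{\mathcal{Q}_i}(\delta)=1$, and $\mathfrak{p}:=(\sqrt{p})$ satisfies $v_{\mathfrak{p}}(\delta)=1$; thus $\mathcal{Q}_1,\mathcal{Q}_2,\mathfrak{p}$ all ramify in $K/k$. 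Since $p\equiv 5\pmod 8$, the rational prime $2$ is inert in $k$, so the resulting prime $\mathfrak{q}_2:=2\mathcal{O}_k$ may or may not ramify in $K/k$; I would decide this by an explicit computation of $\delta\bmod 8\mathcal{O}_k$ in the local field $\mathbb{Q}_2(\sqrt{p})$. Exactly one archimedean place of $k$ (the one at which $\sqrt{p}<0$) ramifies in $K$.

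Following Qin's recipe, I would then take the columns $(a_j)$ of the generalized R\'edei matrix $R_{K/k}$ to be the generators of the ramified primes $q_1, q_2, \sqrt{p}$ together with the fundamental unit $\varepsilon_p$, adjoining one further generator at $2$ if $\mathfrak{q}_2$ is ramified. Each entry $\left(\frac{a_j,\delta}{\mathcal{P}_i}\right)$ is computed by the standard mechanism: at the odd tame primes $\mathcal{Q}_i$ and $\mathfrak{p}$ the tame-symbol formula reduces every entry to a quadratic character in the residue field; the archimedean entries are just signs; and at $\mathfrak{q}_2$ (if ramified) one proceeds by an explicit $2$-adic computation.

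The pivotal calculation is that at $\mathcal{Q}_i$ one has $\left(\frac{\sqrt{p},\delta}{\mathcal{Q}_i}\right)=\overline{\sqrt{p}}^{(q_i^2-1)/2}\in\mathbb{F}_{q_i^2}^{\times}$. Using the hypothesis $\left(\frac{p}{q_i}\right)=-1$ and the generator structure of $\mathbb{F}_{q_i^2}^{\times}$, a direct check shows this is $+1$ exactly when $q_i\equiv 3\pmod 4$ and $-1$ when $q_i\equiv 1\pmod 4$. Parallel calculations handle the remaining entries of the column indexed by $\sqrt{p}$ (at $\mathfrak{p}$, the archimedean place, and possibly $\mathfrak{q}_2$). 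When $q_1\equiv q_2\equiv 3\pmod 4$, these vanishings conspire to produce a linear dependence making the $\sqrt{p}$-column expressible in terms of the others, so the rank of $R_{K/k}$ drops by one; in every other case, at least one $q_i\equiv 1\pmod 4$ produces a non-trivial entry that obstructs this dependence, and the matrix attains the maximal possible rank $t-1$. Substituting into \eqref{4RF} then delivers $r_4(K)=1$ in the $(3,3)$ case and $r_4(K)=0$ otherwise.

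The principal technical obstacle is the $2$-adic analysis: deciding whether $\mathfrak{q}_2$ ramifies in $K/k$ and, in that case, computing the corresponding row of $R_{K/k}$ via $2$-adic Hilbert symbols in $\mathbb{Q}_2(\sqrt{p})$. The tame computations at the odd primes are mechanical once the symbol identities are in place; what requires care is ensuring that neither the archimedean nor the $2$-adic row conceals an extra dependence beyond the $\pmod 4$ dichotomy above, so that the rank behaviour really is governed by this dichotomy.
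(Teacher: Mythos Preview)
Your approach is exactly the paper's: set up Qin's generalized R\'edei matrix for $K/k$ and compute its rank case by case.  Your residue-field computation of $\left(\frac{\sqrt{p},\delta}{\mathcal{Q}_i}\right)$ is correct and agrees with the paper's value $-\left(\frac{-1}{q_i}\right)$; the paper obtains it indirectly via the identity $\left(\frac{-\delta,\delta}{\mathcal{P}}\right)=1$ rather than by the tame symbol in $\mathbb{F}_{q_i^2}$.

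There is, however, a genuine gap.  You leave open whether the prime $\mathfrak{q}_2=2\mathcal{O}_k$ ramifies in $K/k$, deferring it to ``an explicit computation of $\delta\bmod 8\mathcal{O}_k$.''  In fact $2_I$ \emph{always} ramifies here: the paper uses the relative discriminant $\Delta_{K/k}=(4d\sqrt{p})$, so $v_{2_I}(\Delta_{K/k})=2$.  This is not a cosmetic point.  If one drops the $2_I$-row (and the $2$-column) from the matrix, then in the $(3,3)\pmod 4$ case the remaining $4\times 5$ matrix has rank $3$, giving $r_4(K)=(4-1)-3=0$ instead of the correct $1$.  So the $2$-adic row is precisely what produces the rank drop you are looking for; the ``conspiracy'' you describe does not happen without it.

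A smaller issue: your column list omits $-1$.  In Qin's formulation the unit columns are a full set of representatives of $E_k/(E_k\cap N_{K/k}(K^\times))$, and since $-1,\varepsilon_p,-\varepsilon_p$ are all non-norms (Lemma~\ref{SHL1}) one needs both $-1$ and $\varepsilon_p$.  In this particular problem the $-1$-column turns out to be a combination of the others and its omission does not change the rank, but you have not checked this.  Finally, your description of the rank drop as ``the $\sqrt{p}$-column becoming expressible in terms of the others'' is not quite what happens: in the paper's explicit matrices, what occurs in the $(3,3)$ case is that the \emph{rows} at $\tilde{q}_1$ and $\tilde{q}_2$ become equal (both equal to $(0,0,0,0,0,1)$), and it is this row coincidence that drops the rank from $4$ to $3$.
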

We will analyze the behaviour of this non-Galois extension, $K$, of degree 4 in order to compare, in an other paper in preparation, the results obtained here in this paper with those that can one have when $K$ is replaced by a biquadratic field. The determination of the $2$-rank and $4$-rank of the class group of $K$, can be also among the most important properties used to study the following problems:
\begin{enumerate}[\indent\rm1.]
 \item Characterize the structure of the $2$-group $G=Gal(K^{(2)}_{2}/K)$  of $K$ where $K^{(2)}_{2}$ is its second Hilbert $2$-class field.
  \item  Determine the Hilbert $2$-class field tower of pure quartic field $K$.
   \item Study the capitulation of the $2$-ideal classes of the pure quartic field $K$  in the intermediate sub-extensions of $K^{(1)}_{2}/K$ where $K^{(1)}_{2}$ is the first Hilbert $2$-class field of $K$.
  \end{enumerate}
\section{\large \bf{The $2$-rank of $Cl(K)$}\label{hay2}}
In this section, we assume that $d = q_{1}\cdots q_{s}q'_{1}\cdots q'_{t}$ where $q_{i}$ and $q'_j$ are distinct primes satisfying $\left(\frac{p}{q_{i}}\right)=-1$, $\left(\frac{p}{q'_{j}}\right)=1$ for each $i$, $j$.

The relative discriminant of $ K/k $ is $\Delta_{K/k}=(4d\sqrt{p})$ (see \cite{H92}), then the finite primes ramified in $ K/k $ are
$(\sqrt{p}), \tilde{q_{1}}, \cdots, \tilde{q_{s}}$, $ \pi_{1}, \cdots, \pi_{t}$, $\tilde{\pi_{1}}, \cdots, \tilde{\pi_{t}}$ and $2_{I}$, where $pO_{k}=(\sqrt{p})^{2}$, $2_{I}=2O_{k}$, $ \pi_{i}\tilde{\pi}_{i}=q'_{i}O_{k}$ and $\tilde{q_{i}}=q_{i}O_{k} $ for each $ i $.
Regarding   infinite primes ramifying in $K/k$,  $k$ has two infinite primes $p_{\infty}$ and $\tilde{p}_{\infty}$ which correspond respectively to the two following $\mathbb{Q}$-embeddings:  $i_{p_{\infty}}: \sqrt{p}\longmapsto -\sqrt{p}$ and $i_{\tilde{p}_{\infty}}: \sqrt{p}\longmapsto\sqrt{p}$. As $i_{p_{\infty}}$ can be extend to the two $\mathbb{Q}$-embeddings:  $j_{p_{\infty}}: \sqrt{p}\longmapsto i\sqrt[4]{p}$ and $\bar{j}_{p_{\infty}}: \sqrt{p}\longmapsto -i\sqrt[4]{p}$, which are complex embeddings, and $i_{\tilde{p}_{\infty}}$ can be extend to the two $\mathbb{Q}$-embeddings:  $j_{\tilde{p}_{\infty}}: \sqrt{p}\longmapsto \sqrt[4]{p}$ and $\bar{j}_{\tilde{p}_{\infty}}: \sqrt{p}\longmapsto -\sqrt[4]{p}$,  which are real embeddings, then $p_{\infty}$ is the unique infinite prime of $k$ that ramifies in $K$.
\begin{lemma}\label{SHL1}
Keeping  previous hypotheses and notations, then
$$  \left(\frac{-1;\,\delta}{p_{\infty}}\right)= \left(\frac{\varepsilon_{p};\,\delta}{\sqrt{p}}\right)= \left(\frac{ -\varepsilon_{p};\,\delta}{\sqrt{p}}\right)=-1.$$		

\end{lemma}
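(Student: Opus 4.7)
The plan is to treat the archimedean symbol and the two non-archimedean ones separately.

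For $\left(\frac{-1;\,\delta}{p_{\infty}}\right)$: the real place $p_\infty$ corresponds to the embedding $\sqrt{p}\mapsto-\sqrt{p}$. Under it, $-1$ is negative and $\delta=d\sqrt{p}$ is sent to a negative real number, because $d=q_1\cdots q_s q'_1\cdots q'_t>0$. The Hilbert symbol at a real place is $-1$ precisely when both arguments are negative, which is exactly our situation; so this symbol equals $-1$.

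For the two symbols at $(\sqrt{p})$: this prime is totally ramified over $p$ with residue field $\mathbb{F}_p$ and uniformizer $\sqrt{p}$. Since $p$ is odd, I would apply the tame Hilbert symbol formula. As $v_{(\sqrt{p})}(\pm\varepsilon_p)=0$ and $\delta=\sqrt{p}\cdot d$ with $\gcd(d,p)=1$, only the residue of the unit argument enters, giving
\[
\left(\frac{\pm\varepsilon_p;\,\delta}{\sqrt{p}}\right)=\left(\frac{\pm\varepsilon_p\bmod\sqrt{p}}{p}\right).
\]
Writing $\varepsilon_p=a+b\sqrt{p}$ with $a,b\in\mathbb{Z}$, the residues modulo $\sqrt{p}$ are $\pm a\in\mathbb{F}_p$, so the computation reduces to the rational Legendre symbols $\left(\frac{\pm a}{p}\right)$.

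The hypothesis $p\equiv 5\pmod 8$ enters next. Since $p\equiv 1\pmod 4$, the fundamental unit satisfies $N_{k/\mathbb{Q}}(\varepsilon_p)=a^2-pb^2=-1$, whence $a^2\equiv-1\pmod p$. By Euler's criterion,
\[
\left(\frac{a}{p}\right)\equiv a^{(p-1)/2}=(a^2)^{(p-1)/4}\equiv(-1)^{(p-1)/4}\pmod p,
\]
and $(p-1)/4$ is odd precisely because $p\equiv 5\pmod 8$, so $\left(\frac{a}{p}\right)=-1$. Combined with $\left(\frac{-1}{p}\right)=1$ (again from $p\equiv 1\pmod 4$), this gives $\left(\frac{-a}{p}\right)=-1$ as well, and the lemma follows.

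The only delicate point is the correct application of the tame Hilbert symbol formula at the ramified prime $(\sqrt{p})$, keeping track of the uniformizer and unit parts; once that is in place, everything reduces to elementary Legendre-symbol manipulations driven by the congruence $p\equiv 5\pmod 8$ together with the norm relation $N(\varepsilon_p)=-1$.
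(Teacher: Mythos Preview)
Your archimedean computation matches the paper's. For the symbols at $(\sqrt{p})$ the paper proceeds differently: it reduces $\left(\frac{\varepsilon_p,\delta}{\sqrt{p}}\right)$ to the quadratic residue symbol $\left[\frac{\varepsilon_p}{\sqrt{p}}\right]$ and then invokes an external reference (Parry, \emph{Pure quartic number fields whose class numbers are even}, Lemma~V) for the value $-1$; the third symbol is obtained by multiplicativity together with $\left(\frac{-1}{p}\right)=1$. Your route via the tame symbol and the explicit identity $a^2\equiv-1\pmod p$, followed by $(-1)^{(p-1)/4}=-1$, is more self-contained and makes transparent exactly where the hypothesis $p\equiv5\pmod8$ is used.

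One small inaccuracy: since $p\equiv1\pmod4$, the fundamental unit need not lie in $\mathbb{Z}[\sqrt{p}]$; for instance $\varepsilon_5=(1+\sqrt5)/2$. So the claim ``$a,b\in\mathbb{Z}$'' can fail. The fix is painless: let $\bar a\in\mathbb{F}_p^\times$ denote the image of $\varepsilon_p$ in $\mathcal{O}_k/(\sqrt{p})\cong\mathbb{F}_p$. Because the nontrivial automorphism of $k$ sends $\sqrt{p}\mapsto-\sqrt{p}$, the conjugate $\varepsilon_p'$ has the \emph{same} image $\bar a$, whence $\bar a^{\,2}=\overline{\varepsilon_p\varepsilon_p'}=\overline{N_{k/\mathbb{Q}}(\varepsilon_p)}=-1$ in $\mathbb{F}_p$. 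Your Euler-criterion step then goes through verbatim with $\bar a$ in place of $a$.
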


\begin{proof}~\
\begin{enumerate}[{(1)}]
		\item   Let $i_{p_{\infty}}:\sqrt{p} \longmapsto -\sqrt{p}$ be the complex $\mathbb{Q}$-isomorphism of $k$ corresponding to $p_{\infty}$, then by definition  of Hilbert symbol given in \cite[{Ch II \textsection} 7 Definitions 7.1, p. 195]{G03} and \cite[{Ch II \textsection} 7 Definitions 7.3.1, p. 201]{G03}, we have,
		$$\left(\frac{-1,\,\delta}{p_{\infty}}\right)= i_{p_{\infty}}^{-1}((i_{p_{\infty}}(-1),  i_{p_{\infty}}(\delta)) _{p_{\infty}})
		=i_{p_{\infty}}^{-1}((-1, -\delta) _{p_{\infty}})
		=i_{p_{\infty}}^{-1}(-1)
		=-1.$$
		\item  Since $v_{(\sqrt{p})}(\delta)=1$ and via the property \cite[Lemma V, p. 105]{N5},
		$$\left(\frac{\varepsilon_{p},\,\delta}{\sqrt{p}}\right)=\left[\frac{  \varepsilon_{p}}{\sqrt{p}}\right]^{v(\delta)}=\left[\frac{  \varepsilon_{p}}{\sqrt{p}}\right]=-1.\hspace{16em}$$
		\item  We have 
		$\displaystyle\left(\frac{ -1,\,\delta}{\sqrt{p}}\right)=\left[\frac{ -1}{\sqrt{p}}\right]=\left(\frac{ -1}{p}\right)=1$.
		Then by applying  multiplicative property of  Hilbert symbol we get
		$\displaystyle\left(\frac{-\varepsilon_{p}, \delta}{\sqrt{p}}\right)=\left(\frac{\varepsilon_{p}, \delta}{\sqrt{p}}\right)\left(\frac{-1, \delta}{\sqrt{p}}\right)=-1$.
	\end{enumerate}
\end{proof}
\begin{proof}[\rm\textbf{Proof of Theorem A}]
Firstly, we know from Hasse norm theorem \cite[{Ch II \textsection} 6, Theorem 6.2, p. 179]{G03} that  an element $\alpha$ in $k^{\times}$ is norm in $K$ if and only if $\left(\frac{\alpha,\, \delta}{\mathcal{P}_{i}}\right)=1$ for all primes of $k$ ramified in $K$. So, by Lemma $(\ref{SHL1})$, the units $ -1$, $\varepsilon_{p}$, $ -\varepsilon_{p}$ of $k$ are not norms of elements of $K$. Since
$E_{k}=\langle-1, \varepsilon_{p}\rangle$ and $E_k \cap N_{K/k}(K^\times)=\langle \varepsilon^{2}_{p}\rangle$, therefore
$[E_k:E_k \cap N_{K/k}(K^\times)]=4$, so $e=2$. Secondly, the finite and infinite primes ramified in $K/k$ are
 $p_{\infty}, (\sqrt{p}), 2_{I}, \tilde{q_{1}}, \cdots, \tilde{q_{s}}, \pi_{1}, \cdots, \pi_{t}$, $\tilde{\pi_{1}}, \cdots, \tilde{\pi_{t}}.$ 	Then their number is $ 2t+s+3$. Finally, the $2$-rank of the class group of $K$ is computed using  formula \eqref{ACNF2}.
\end{proof}
\begin{cor}\label{SHC}
Keeping  previous hypotheses and notations, then $ r_{2}(K)=2 $ if and only if one of the following conditions holds:
  		\begin{enumerate}[{\indent\rm (a)}]
  			\item  $d=q$ is a prime number such that $\left(\frac{p}{q}\right)=1$,
  			\item  $d=q_{1}q_{2}$, where $q_{1}, q_{2}$ are odd primes such that $\left(\frac{p }{q_{1}}\right)=\left(\frac{p }{q_{2}}\right)=-1$.
  		\end{enumerate}
  \end{cor}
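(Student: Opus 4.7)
The plan is to read this corollary as a direct arithmetic consequence of Theorem A, so the strategy reduces to solving the diophantine equation $2t+s=2$ over the non-negative integers and interpreting each solution in terms of the prime factorization of the square-free integer $d$.

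First I would invoke Theorem A, which gives $r_{2}(K)=2t+s$ where $s$ counts the prime divisors $q_{i}$ of $d$ with $\left(\frac{p}{q_{i}}\right)=-1$ and $t$ counts those $q'_{j}$ with $\left(\frac{p}{q'_{j}}\right)=1$. Setting $r_{2}(K)=2$ amounts to $2t+s=2$ with $s,t\in\mathbb{Z}_{\geq 0}$, which admits exactly the two solutions $(t,s)=(1,0)$ and $(t,s)=(0,2)$.

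Next I would translate each solution back to the factorization of $d$. In the case $(t,s)=(1,0)$, the integer $d$ has exactly one prime factor, namely a single $q'_{1}$ satisfying $\left(\frac{p}{q'_{1}}\right)=1$; since $d$ is square-free and coprime to $p$, this forces $d=q$ with $\left(\frac{p}{q}\right)=1$, which is condition (a). In the case $(t,s)=(0,2)$, $d$ has exactly two prime factors $q_{1},q_{2}$, both satisfying $\left(\frac{p}{q_{i}}\right)=-1$; square-freeness again forces $d=q_{1}q_{2}$, which is condition (b). Conversely, in either of the two listed cases, plugging the corresponding values of $s$ and $t$ into the formula $r_{2}(K)=2t+s$ yields $2$, establishing the reverse implication.

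There is essentially no obstacle here: the whole content of the corollary is already contained in Theorem A, and the proof is a one-line case analysis on the equation $2t+s=2$ together with the fact that the primes $q_{i}$ and $q'_{j}$ appearing in Theorem A exhaust the prime divisors of the odd square-free integer $d$.
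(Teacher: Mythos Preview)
Your proposal is correct and follows exactly the same route as the paper: invoke Theorem~A to get $r_{2}(K)=2t+s$, solve $2t+s=2$ in non-negative integers to obtain $(t,s)=(1,0)$ or $(0,2)$, and translate these back into the two listed shapes of $d$. There is nothing to add.
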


  \begin{proof}
By Theorem A, $r_{2}(K)=2 $ if and only if ($t=1$ and $s=0$) or ($t=0$ and $s=2$). Hence,
  	\begin{enumerate}[{\indent\rm (a)}]
  		\item  if $t=1$ and $s=0$, then $d=q$ is a prime number and $\left(\frac{p}{q}\right)=1$,
  		\item  if $t=0$ and $s=2$, then $d=q_{1}q_{2}$ where $q_{1}, q_{2}$ are odd primes such that $\left(\frac{p }{q_{1}}\right)=\left(\frac{p }{q_{2}}\right)=-1$.
  	\end{enumerate}
  \end{proof}
\section{\large \bf{The $4$-rank of $ Cl(K)$}\label{hay1}}
In this section, we compute the $4$-rank of $Cl(K)$ whenever $r_2(K)=2$. At the end of each case, we give some numerical examples which are verified using the Pari/GP calculator (version 2-11-3), \cite{PG18}.
\subsection{Case: $d=q$ is a prime number and $\left(\frac{p}{q}\right)=1$}
Let $ p$ and $ q$ be two different odd prime numbers such that $p\equiv 5 \pmod 8$, $\left(\frac{p}{q}\right)=1$ and $\pi_{1}$, $\tilde{\pi}_{1}$ are prime ideals of $\mathcal{O}_k$ lying above $q$,
the ideals $\pi_{1}^{h}$, $\tilde{\pi}_{1}^{h}$ are principal ideals of $\calO_k$, put $\pi_{1}^{h}=(x+y\sqrt{p})/2$  and $\tilde{\pi}_{1}^{h}=(x-y\sqrt{p})/2$, where $h=h(k)$ is
the class number of $k=\mathbb{Q}(\sqrt{p})$. Without loss of generality, we can suppose that  $(x-y\sqrt{p})>0$ and $(x+y\sqrt{p}) >0$, because
\begin{itemize}
	\item [$ \bullet $] if $(x-y\sqrt{p})<0$ and $(x+y\sqrt{p})<0$, we replace $(x+y\sqrt{p})$ by $-(x+y\sqrt{p})$ and $(x-y\sqrt{p})$ by $-(x-y\sqrt{p})$. Which are also positive generators for the ideals $\pi_{1}^{h}$ and $\tilde{\pi}_{1}^{h}$ respectively. 
	\item [$ \bullet $] if $(x-y\sqrt{p})>0$ and $(x+y\sqrt{p})<0$, we have $N_{k/\mathbb{Q}}(\varepsilon_{p})=\varepsilon_{p}\varepsilon'_{p}=-1$ and $\varepsilon'_{p}<0$, then we can replace $(x+y\sqrt{p})$ by $\varepsilon_{p}(x+y\sqrt{p})$ and $(x-y\sqrt{p})$ by $\varepsilon'_{p}(x-y\sqrt{p})$, which are also positive generators for the ideals $\pi_{1}^{h}$, $\tilde{\pi}_{1}^{h}$ respectively.
\end{itemize}	

Furthermore,  applying  absolu norm map to the ideal $\pi_{1}^{h}=(x+y\sqrt{p})/2)$, one gets $\pm4q^{h}=x^{2}-py^{2}$. But if we considre $x+y\sqrt{p}>0 $ and $x-y\sqrt{p}>0 $ this equation becomes $4q^{h}=x^{2}-py^{2}$. So   we can reduce our study to  the case when the ideals $\pi_{1}^{h}$ and $\tilde{\pi}_{1}^{h}$ have positive generators.

  To  compute  the $ 4$-rank of the class group of $K$ by using the generalized R\'edei-matrix, we need the following lemmas.
\begin{lemma}\label{SHL2}
	Keeping  previous hypotheses and notations, then
	\begin{enumerate}[\rm(a)]
		\item $\left(\frac{\varepsilon_{p},\, \delta}{p_{\infty}}\right)=\left(\frac{\sqrt{p},\, \delta}{p_{\infty}}\right)=-1.$
		\item $\left(\frac{2,\, \delta}{p_{\infty}}\right)=\left(\frac{2(x+y\sqrt{p}),\, \delta}{p_{\infty}}\right)=\left(\frac{2(x-y\sqrt{p)},\, \delta}{p_{\infty}}\right)=1.$		
	\end{enumerate}		
\end{lemma}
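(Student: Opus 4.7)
The plan is to evaluate each symbol by the same recipe used in the proof of Lemma~\ref{SHL1}: push both entries through the embedding $i_{p_\infty}\colon \sqrt{p}\mapsto -\sqrt{p}$ that corresponds to $p_\infty$, and then read off the resulting real Hilbert symbol, which takes the value $-1$ precisely when both images are negative real numbers and $+1$ otherwise. Since in this section $\delta=q\sqrt{p}$ with $q>0$, one has $i_{p_\infty}(\delta)=-q\sqrt{p}<0$, so in every case the value of $\left(\frac{\alpha,\,\delta}{p_\infty}\right)$ is governed entirely by the sign of $i_{p_\infty}(\alpha)$.

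For part (a), the equality $\left(\frac{\sqrt{p},\,\delta}{p_\infty}\right)=-1$ is immediate from $i_{p_\infty}(\sqrt{p})=-\sqrt{p}<0$. For the unit $\varepsilon_p$, I would invoke the relation $N_{k/\mathbb{Q}}(\varepsilon_p)=\varepsilon_p\varepsilon_p'=-1$ (already recalled in the paragraph preceding the lemma) combined with $\varepsilon_p>0$ to conclude that $i_{p_\infty}(\varepsilon_p)=\varepsilon_p'<0$, which gives $\left(\frac{\varepsilon_p,\,\delta}{p_\infty}\right)=-1$.

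For part (b), the decisive input is the sign normalization fixed just before the lemma, namely $x+y\sqrt{p}>0$ and $x-y\sqrt{p}>0$. Applying $i_{p_\infty}$ interchanges these two quantities, so
\[
i_{p_\infty}\bigl(2(x+y\sqrt{p})\bigr)=2(x-y\sqrt{p})>0, \qquad i_{p_\infty}\bigl(2(x-y\sqrt{p})\bigr)=2(x+y\sqrt{p})>0,
\]
and both first-entry images are positive, hence the corresponding Hilbert symbols equal $+1$. The remaining equality $\left(\frac{2,\,\delta}{p_\infty}\right)=1$ is trivial since $i_{p_\infty}(2)=2>0$.

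I do not anticipate any genuine obstacle: the whole argument reduces to a short sign computation following the template of Lemma~\ref{SHL1}. The one point that warrants attention is that the positivity convention on the generators $x\pm y\sqrt{p}$ is used in an essential way in part (b); without it one of the two symbols would flip sign, which would in turn affect the generalized Rédei matrix computed in the next subsection. For this reason I would state explicitly at the outset that all signs are being measured after applying $i_{p_\infty}$, and then refer back to the three-paragraph discussion before the lemma to justify the positivity of $x\pm y\sqrt{p}$.
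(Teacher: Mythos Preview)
Your proposal is correct and follows essentially the same approach as the paper: both arguments evaluate each Hilbert symbol at $p_\infty$ by pushing the entries through the embedding $i_{p_\infty}\colon\sqrt{p}\mapsto-\sqrt{p}$ and reading off the sign of the real Hilbert symbol, using $N_{k/\mathbb{Q}}(\varepsilon_p)=-1$ for the unit and the positivity convention on $x\pm y\sqrt{p}$ for part~(b). The only cosmetic difference is that you first isolate the observation $i_{p_\infty}(\delta)<0$ and then check only the sign of the first entry, whereas the paper writes out each symbol in full; the content is identical.
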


\begin{proof} By the  definition  of Hilbert symbol given in \cite[{Ch II, \textsection} 7 Definitions 7.1, p. 195]{G03} and \cite[{Ch II \textsection} 7 Definitions 7.3.1, p. 201]{G03}, we have
	\begin{enumerate}[(a)]
		\item
$\left(\frac{\varepsilon_{p},\, \delta}{p_{\infty}}\right)= i_{p_{\infty}}^{-1}((i_{p_{\infty}}(\varepsilon_{p}), i_{p_{\infty}}(\delta)) _{p_{\infty}})
=i_{p_{\infty}}^{-1}((\varepsilon'_{p}, -\delta) _{p_{\infty}})
=i_{p_{\infty}}^{-1}(-1)
=-1,$
since the fundamental unit of $ k$ satisfy $\varepsilon_{p}>0$  and $ N_{k/\mathbb{Q}}(\varepsilon_{p})=\varepsilon_{p}\varepsilon'_{p}=-1$, then $\varepsilon'_{p}<0.$\\
$\left(\frac{\sqrt{p},\, \delta}{p_{\infty}}\right)= i_{p_{\infty}}^{-1}((i_{p_{\infty}}(\sqrt{p}), i_{p_{\infty}}(\delta)) _{p_{\infty}})
=i_{p_{\infty}}^{-1}((-\sqrt{p}, -\delta) _{p_{\infty}})
=i_{p_{\infty}}^{-1}(-1)
=-1.$
	\item 	
$\left(\frac{2,\, \delta}{p_{\infty}}\right)= i_{p_{\infty}}^{-1}((i_{p_{\infty}}(2), i_{p_{\infty}}(\delta)) _{p_{\infty}})
=i_{p_{\infty}}^{-1}((2, -\delta) _{p_{\infty}})
=i_{p_{\infty}}^{-1}(1)
=1$.
$$\left(\frac{2(x+y\sqrt{p}),\, \delta}{p_{\infty}}\right)= i_{p_{\infty}}^{-1}((i_{p_{\infty}}(2(x+y\sqrt{p})), i_{p_{\infty}}(\delta)) _{p_{\infty}})
=i_{p_{\infty}}^{-1}((2(x-y\sqrt{p}), -\delta) _{p_{\infty}})
=i_{p_{\infty}}^{-1}(1)
=1.$$
$$\left(\frac{2(x-y\sqrt{p}),\, \delta}{p_{\infty}}\right)= i_{p_{\infty}}^{-1}((i_{p_{\infty}}(2(x-y\sqrt{p})), i_{p_{\infty}}(\delta)) _{p_{\infty}})
=i_{p_{\infty}}^{-1}((2(x+y\sqrt{p}), -\delta) _{p_{\infty}})
=i_{p_{\infty}}^{-1}(1)
=1.$$
	\end{enumerate}

\end{proof}

\begin{lemma}\label{SHL3} Let $p$ and $q$ be distinct odd primes with $p\equiv 1\pmod 4$. Then if $(x, y)$ is an integer solution of the equation $u^{2}-pv^{2}=4q^{h}$, then
	    		$$  \left(\frac{ x}{p}\right)=\left(\frac{ 2}{p}\right)\left(\frac{ q}{p}\right)_{4}.$$
Moreover, if $p\equiv 5\pmod 8$, then
   $$\left(\frac{ x}{q}\right)= \begin{cases} \left(\frac{p}{q}\right)_{4},& \text{if $ q\equiv 1\pmod 4$, }  \\
   \\
   -\left(\frac{q}{p}\right)_{4}, & \text{if $q\equiv3 \pmod 4$.}
        \end{cases}$$
\end{lemma}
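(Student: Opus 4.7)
The first identity follows directly from Euler's criterion. Since $p\equiv 1\pmod 4$ the exponent $(p-1)/4$ is a positive integer, so from $x^{2}\equiv 4q^{h}\pmod p$ one obtains
\[
\left(\tfrac{x}{p}\right)\equiv (x^{2})^{(p-1)/4}\equiv 2^{(p-1)/2}\bigl(q^{(p-1)/4}\bigr)^{h}\pmod p.
\]
The first factor is $\left(\tfrac{2}{p}\right)$, and the second is $\bigl(\left(\tfrac{q}{p}\right)_{4}\bigr)^{h}$: the hypothesis $\left(\tfrac{p}{q}\right)=1$ together with quadratic reciprocity gives $\left(\tfrac{q}{p}\right)=1$, so $q^{(p-1)/4}\equiv \left(\tfrac{q}{p}\right)_{4}\pmod p$. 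Finally, $h=h(k)$ is odd (by genus theory, since only $p$ ramifies in $k/\mathbb{Q}$), and the identity follows.

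For the second identity I would carry out the analogous manipulation modulo $q$, starting from $x^{2}\equiv py^{2}\pmod q$. For $q\equiv 1\pmod 4$ this gives $\left(\tfrac{x}{q}\right)=\left(\tfrac{y}{q}\right)\left(\tfrac{p}{q}\right)_{4}$ directly; for $q\equiv 3\pmod 4$, the factorisation $x^{(q-1)/2}=x\cdot x^{(q-3)/2}$ combined with the congruence $x\equiv -y\beta_{1}\pmod q$, where $\beta_{1}$ denotes the image of $\sqrt{p}$ in $\mathcal{O}_{k}/\pi_{1}$, produces a parallel identity $\left(\tfrac{x}{q}\right)=-\left(\tfrac{y}{q}\right)\left(\tfrac{\beta_{1}}{q}\right)$. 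In either case the lemma is thereby reduced to identifying the residual factor $\left(\tfrac{y}{q}\right)$ (or $\left(\tfrac{y}{q}\right)\left(\tfrac{\beta_{1}}{q}\right)$), and this residual factor cannot be read off from the congruence $x^{2}-py^{2}=4q^{h}$ alone: it encodes the global constraint that $\alpha=(x+y\sqrt{p})/2$ generates the specific prime power $\pi_{1}^{h}$ and lies in the totally positive cone $(\alpha,\alpha'>0)$.

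To pin down the residual factor I would apply Hasse's product formula to the pair $(\alpha,\delta)$ in $k$. The tame contributions at $\sqrt{p},\pi_{1},\tilde{\pi}_{1}$ can be computed explicitly, using Part 1 together with $\sqrt{p}\mp\beta_{1}$ as local uniformizers at $\pi_{1},\tilde{\pi}_{1}$, and turn out to be $\left(\tfrac{q}{p}\right)_{4}$, $\left(\tfrac{y}{q}\right)$, $\left(\tfrac{x}{q}\right)$ respectively, while both archimedean contributions vanish by the positivity of $\alpha$ and $\alpha'$ built into the setup. The remaining factor, the wild symbol $(\alpha,\delta)_{2\mathcal{O}_{k}}$ at the inert prime above $2$, is the main technical obstacle: the piece $(\alpha,q)_{2\mathcal{O}_{k}}$ reduces via the functoriality $(a,b)_{L}=(N_{L/\mathbb{Q}_{2}}(a),b)_{\mathbb{Q}_{2}}$ for $b\in \mathbb{Q}_{2}^{\times}$ to $(q^{h},q)_{\mathbb{Q}_{2}}=\left(\tfrac{-1}{q}\right)$, but the factor $(\alpha,\sqrt{p})_{2\mathcal{O}_{k}}$ requires a direct $2$-adic analysis of $\alpha$ in $\mathbb{Z}_{2}[(1+\sqrt{p})/2]$, exploiting the hypothesis $p\equiv 5\pmod 8$ (which makes $2$ inert and pins down the unit group modulo squares). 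Once that contribution is in hand, the dichotomy between the cases $q\equiv 1,3\pmod 4$ in the lemma's conclusion emerges from the factor $\left(\tfrac{-1}{q}\right)$, which is precisely what distinguishes $+\left(\tfrac{p}{q}\right)_{4}$ from $-\left(\tfrac{q}{p}\right)_{4}$.
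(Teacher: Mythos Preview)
Your Part 1 is correct and essentially matches the paper's argument.

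For Part 2, there is a genuine gap. You correctly reduce to computing $\left(\tfrac{y}{q}\right)$ (when $q\equiv 1\pmod 4$), but then assert that this factor ``cannot be read off from the congruence $x^{2}-py^{2}=4q^{h}$ alone.'' This is false, and the paper's proof does exactly that by an elementary Jacobi-symbol trick. Writing $y=2^{e}y'$ with $y'$ odd, quadratic reciprocity (using $q\equiv 1\pmod 4$) gives $\left(\tfrac{y'}{q}\right)=\left(\tfrac{q}{y'}\right)$; reducing the equation modulo the odd prime divisors of $y'$ shows $4q^{h}\equiv x^{2}$, hence $\left(\tfrac{q}{y'}\right)=1$ since $h$ is odd. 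Thus $\left(\tfrac{y}{q}\right)=\left(\tfrac{2}{q}\right)^{e}$, and a short $\bmod\ 8$ analysis of $q^{h}=a^{2}-pb^{2}$ (with $x=2a,\ y=2b$) using $p\equiv 5\pmod 8$ pins down $e$ in the only nontrivial subcase $q\equiv 5\pmod 8$. The case $q\equiv 3\pmod 4$ is handled by the mirror trick: flip $\left(\tfrac{x'}{q}\right)=\left(\tfrac{-q}{x'}\right)$, reduce the equation modulo $x'$ to get $\left(\tfrac{-q}{x'}\right)=\left(\tfrac{p}{x'}\right)$, and then feed in Part 1.

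Your proposed alternative via the product formula for $(\alpha,\delta)$ is not only much heavier, it is also left incomplete: you never compute the wild symbol $(\alpha,\sqrt{p})_{2\mathcal{O}_{k}}$, which is the whole crux of that route. Since the elementary argument above already closes the lemma, there is no need to go through local class field theory here.
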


\begin{proof}Let $(x, y)$ be a solution of the equation $4q^{h}=u^{2}-pv^{2}$, then $x^{2}$ and $y^{2}$ have same parity, so $x$ and $y$ have same parity too. Hence $4q^{h}\equiv x^2\pmod p$. As $p\equiv 1\pmod 4$, so 	
	    $$\begin{array}{rcl}
	    \left(\frac{ x}{p}\right)
     	&\equiv&(x^{2})^{\frac{p-1}{4}} \pmod p\\
     	&\equiv& \left(\frac{ x^{2}}{p}\right)_{4} \\
    	&\equiv& \left(\frac{4q^{h}}{p}\right)_{4} \\
    	&\equiv& \left(\frac{ 4}{p}\right)_{4}\left(\frac{ q}{p}\right)_{4} \\
	    &=&  \left(\frac{2}{p}\right)\left(\frac{ q}{p}\right)_{4}.
    	\end{array}
    	$$
    		
For the second equality, put $ x=2^{r}x'$ and $ y=2^{e}y'$ such that $2\nmid x'$ and $2\nmid y'$.
	 \begin{enumerate}[(a)]
	 	\item  If $q\equiv 1 \pmod 4$, then
		$ \begin{array}[t]{ll}
		\left(\frac{ x}{q}\right)&= \left(\frac{x^{2}}{q}\right)_{4}=\left(\frac{y^{2}p}{q}\right)_{4}\\
		&=\left(\frac{y}{q}\right)\left(\frac{p}{q}\right)_{4}=\left(\frac{2}{q}\right)^{e}\left(\frac{q}{y'}\right)\left(\frac{p}{q}\right)_{4}\\
        &=\left(\frac{ 2}{q}\right)^{e}\left(\frac{4q^{h}}{y'}\right)\left(\frac{p}{q}\right)_{4}\\
        &=\left(\frac{2}{q}\right)^{e}\left(\frac{p}{q}\right)_{4}.
        \end{array} $
	 	\begin{itemize}
	 	\item [$ \bullet $] If $q\equiv 1\pmod 8$ or $y$ is odd, then $ \left(\frac{ x}{q}\right)=\left(\frac{p}{q}\right)_{4}$.
	   	\item [$ \bullet $] If $q\equiv 5\pmod 8$ and $x$, $y$ are even, then $x=2a$ and $y=2b$, so $q^{h}=a^{2}-pb^{2}$. Hence by calculation modulo $8$ and taking into account the fact that $h$ is odd,  we get the following table: \\
	 	\begin{center}
	 		
	 		\begin{tabular}{c|rrrrrrrr}
	 			\hline\hline
	 			$a$ & 0 & 1& 2& 3& 4 & 5 & 6& 7 \\
	 			\hline
	 			$a^{2}$ & 0 & 1 & 4& 1& 0 & 1 & 4& 1 \\
	 			\hline
	 			$5-a^{2}$ & 5 & 4 & 1& 4& 5& 4& 1& 4\\
	 			\hline\hline
	 		\end{tabular}
	 		\label{tab:hresult}
	 	\end{center}

	 To this end, as $b^2\equiv0$, $1$ or $4\pmod8$, so $3b^{2}\equiv 0, 3$ or $4\pmod 8$. On the other hand, since $q^h-a^2\equiv 5-a^2\equiv 3b^2\pmod8$ one deduces that $3b^{2}\equiv 4\pmod 8$, thus  multiplying by the inverse of $3\pmod8$ we get $b^{2}\equiv 4 \pmod 8$. Hence $e=2$, this in turn yields that
	 	$$\left(\frac{ x}{q}\right)=\left(\frac{p}{q}\right)_{4}.$$
	 \end{itemize}	
	  	
	 	\item If $q\equiv 3 \pmod 4$, then by  quadratic reciprocity low we get
	 		 $$  \begin{array}{ll}
	 		 \left(\frac{ x}{q}\right)= \left(\frac{2^{r}x'}{q}\right)
	 		 =\left(\frac{2}{q}\right)^{r}\left(\frac{-q}{x'}\right)
	 		 =\left(\frac{2}{q}\right)^{r}\left(\frac{-4q^{h}}{x'}\right)
	 		 =\left(\frac{2}{q}\right)^{r}\left(\frac{p}{x'}\right)=\left(\frac{2}{q}\right)^{r}\left(\frac{2}{p}\right)^{r}\left(\frac{x}{p}\right)=\left(\frac{2}{q}\right)^{r}\left(\frac{q}{p}\right)_{4}(-1)^{r+1}.
	 		 \end{array}$$		
	 		\begin{itemize}
	 			\item [$ \bullet $] If $q\equiv 3\pmod 8$ or $x$ is odd, then $ \left(\frac{ x}{q}\right)= -\left(\frac{q}{p}\right)_{4}$.
	 			\item [$ \bullet $] If $q\equiv 7\pmod 8$ and $x$ and $y$ are even  $x=2a$ and $y=2b$, so $q^{h}=a^{2}-pb^{2}$, hence by calculation modulo 8 we get the following table: \\
	 			\begin{center}
	 				\begin{tabular}{c|rrrrrrrr}
	 					\hline\hline
	 					$b$ & 0 & 1& 2& 3& 4 & 5 & 6& 7\\
	 					\hline
	 					$b^{2}$ & 0 & 1 & 4& 1& 0 & 1 & 4& 1\\
	 					\hline
	 					$7+5b^{2}$ & 7 & 4 & 3& 4& 7& 4& 3& 4\\
	 					\hline\hline
	 				\end{tabular}
	 				\label{tab:hresult}
	 			\end{center}\  \\
	 			with the following congruences :  $b^2\equiv0$, $1$ or $4\pmod8$ and  $a^2\equiv q^h+pb^2\equiv 7+5b^{2}$,we find that $a^{2}\equiv 4 \pmod 8$, then $r=2$. Which also means that
	 			$$\left(\frac{ x}{q}\right)=-\left(\frac{q}{p}\right)_{4}.$$
	 			\end{itemize}
	 		
	 \end{enumerate}
\end{proof}

\begin{lemma}\label{SHL4} Let $ p$ and $ q$ be two different odd prime numbers such that $p\equiv 5 \pmod 8$ and $\left(\frac{p}{q}\right)=1$, then
\begin{enumerate}[\indent\rm(a)]
    	\item  $\left(\frac{2,\, \delta}{\sqrt{p}}\right)=-1.$	
        \item $\left(\frac{\sqrt{p},\, \delta}{\sqrt{p}}\right)=-\left(\frac{\varepsilon_{p},\, \delta}{\sqrt{p}}\right)=1.$
         \\	    		
		\item  $\left(\frac{2(x+y\sqrt{p}),\,  \delta}{\sqrt{p}}\right)=\left(\frac{2(x-y\sqrt{p}),\,  \delta}{\sqrt{p}}\right)=\left(\frac{ q}{p}\right)_{4}.$
\end{enumerate}
\end{lemma}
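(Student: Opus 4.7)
The plan is to exploit the tame (non-dyadic) Hilbert symbol formula at the odd-residue prime $(\sqrt{p})$ of $k$: for $\alpha,\beta\in k^{\times}$ with $\alpha=(\sqrt{p})^{a}u$ and $\beta=(\sqrt{p})^{b}v$, where $u,v$ are units at $(\sqrt{p})$, one has
$$\left(\frac{\alpha,\beta}{\sqrt{p}}\right)=\left(\frac{(-1)^{ab}\,u^{b}\,v^{-a}\bmod\sqrt{p}}{p}\right),$$
which in the unit case $a=0$ reduces to $\left[\frac{\alpha}{\sqrt{p}}\right]^{b}$, the special case already invoked in the proof of Lemma \ref{SHL1}. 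Since $v_{(\sqrt{p})}(\delta)=v_{(\sqrt{p})}(q\sqrt{p})=1$, this formula turns every symbol in the statement into a rational Legendre symbol modulo $p$.

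For (a), since $v_{(\sqrt{p})}(2)=0$ the formula immediately yields $\left(\frac{2,\delta}{\sqrt{p}}\right)=\left(\frac{2}{p}\right)=-1$, because $p\equiv 5\pmod 8$. For (b), the equality involving $\varepsilon_{p}$ is already established in Lemma \ref{SHL1}, so the only new computation is $\left(\frac{\sqrt{p},\delta}{\sqrt{p}}\right)$. Writing $\sqrt{p}=\sqrt{p}\cdot 1$ and $\delta=\sqrt{p}\cdot q$, the tame formula gives $\left(\frac{-q^{-1}}{p}\right)=\left(\frac{-1}{p}\right)\left(\frac{q}{p}\right)$, which equals $+1$ because $p\equiv 1\pmod 4$, and by quadratic reciprocity the hypothesis $\left(\frac{p}{q}\right)=1$ yields $\left(\frac{q}{p}\right)=1$.

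For (c), the essential observation is that $\pi_{1}^{h}=(x+y\sqrt{p})/2$ lies above $q\neq p$, so $x+y\sqrt{p}$ is coprime to $(\sqrt{p})$; hence $v_{(\sqrt{p})}(2(x+y\sqrt{p}))=0$. The tame formula then gives
$$\left(\frac{2(x+y\sqrt{p}),\delta}{\sqrt{p}}\right)=\left[\frac{2(x+y\sqrt{p})}{\sqrt{p}}\right],$$
and reduction modulo $\sqrt{p}$ sends $x+y\sqrt{p}$ to $x\in\mathbb{F}_{p}$, so this becomes $\left(\frac{2x}{p}\right)=\left(\frac{2}{p}\right)\left(\frac{x}{p}\right)$. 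Plugging in the first identity of Lemma \ref{SHL3}, $\left(\frac{x}{p}\right)=\left(\frac{2}{p}\right)\left(\frac{q}{p}\right)_{4}$, the factor $\left(\frac{2}{p}\right)^{2}$ cancels and one is left with $\left(\frac{q}{p}\right)_{4}$. The computation for $x-y\sqrt{p}$ is identical, since only the class of $x$ modulo $p$ intervenes.

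The main obstacle is the careful bookkeeping of $(\sqrt{p})$-adic valuations, which decides whether the unit branch or the tame branch of the formula applies in each case. A minor hygiene point is that $\left(\frac{x}{p}\right)$ must be defined, i.e.\ $p\nmid x$; this follows at once from $x^{2}-py^{2}=4q^{h}$ together with $q\neq p$. Beyond that, every step reduces to an elementary congruence modulo $p$ or an application of quadratic reciprocity.
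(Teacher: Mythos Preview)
Your proof is correct and follows essentially the same route as the paper: reduce each Hilbert symbol at $(\sqrt p)$ to the quadratic residue symbol $\left[\frac{\cdot}{\sqrt p}\right]$, identify that with a Legendre symbol modulo $p$ via the residue field $\mathbb{F}_p$, and finish (c) by invoking Lemma~\ref{SHL3}. The only cosmetic difference is that for (b) the paper uses the identity $\left(\frac{\sqrt p,\,-\sqrt p}{\sqrt p}\right)=1$ to rewrite $\left(\frac{\sqrt p,\,\delta}{\sqrt p}\right)=\left[\frac{-q}{\sqrt p}\right]$, whereas you apply the full tame formula directly; the resulting Legendre-symbol computation is the same.
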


\begin{proof}We have 
\begin{enumerate}[(a)]
		\item $\left(\frac{ 2,\, \delta}{\sqrt{p}}\right)= \left[\frac{ 2}{\sqrt{p}}\right]=\left(\frac{ 2}{p}\right)	=-1.$
		 \\
		\item
		$\left(\frac{\sqrt{p},\, \delta}{\sqrt{p}}\right)=\left[\frac{-q}{\sqrt{p}}\right]=\left(\frac{ -1}{p}\right)\left(\frac{ q}{p}\right)
		=\left(\frac{p}{q}\right)= 1.$
		\item
		$\left(\frac{2(x+y\sqrt{p}),\,  \delta}{\sqrt{p}}\right)=\left[\frac{  2(x+y\sqrt{p})}{\sqrt{p}}\right]=\left(\frac{2x}{p}\right)
		=\left(\frac{2}{p}\right)\left(\frac{x}{p}\right)=\left(\frac{q}{p}\right)_{4}.$
		A similar argument shows that
		$\left(\frac{2(x-y\sqrt{p}),\,  \delta}{\sqrt{p}}\right)=\left(\frac{ q}{p}\right)_{4}.$			
	\end{enumerate}			

\end{proof}

\begin{lemma}
	Keep   hypotheses and notations of Lemma $\ref{SHL4}$, then
	\begin{enumerate}[\indent\rm(a)]
		\item $\left(\frac{-1,\, \delta}{\pi_{1}}\right)=\left(\frac{ -1,\, 
			\delta}{\tilde{\pi_{1}}}\right)=\begin{cases} 1, & \text{if $q\equiv1 \pmod 4$,} \\
		-1, & \text{if $q\equiv3 \pmod 4$.}
		\end{cases}$
		\item
		\begin{itemize}
		\item  If $q\equiv 1 \pmod 4$, then $\left(\frac{ \varepsilon_{p},\, \delta}{\pi_{1}}\right)=\left(\frac{ \varepsilon_{p},\, \delta}{\tilde{\pi}_{1}}\right)= \left(\frac{ p}{q}\right)_{4}\left(\frac{ q}{p}\right)_{4}.$
		\\
		\item 	If $q\equiv 3 \pmod 4$, then $\left(\frac{\varepsilon_{p},\, \delta}{\pi_{1}}\right)=-\left(\frac{\varepsilon_{p},\, \delta}{\tilde{\pi_{1}}}\right)=-\left(\frac{y}{q}\right)$.
		\\
		\end{itemize}
		\item $ \left(\frac{2,\, \delta}{\pi_{1}}\right)=\left(\frac{2,\, \delta}{\tilde{\pi}_{1}}\right)=\left(\frac{ 2}{q}\right).$
		\item
		\begin{itemize}
			\item  If $q\equiv 1 \pmod 4$, then $\left(\frac{\sqrt{p},\, \delta}{\pi_{1}}\right)=\left(\frac{\sqrt{p},\, \delta}{\tilde{\pi}_{1}}\right)= \left(\frac{ p}{q}\right)_{4}.$
			\\
			\item 	If $q\equiv 3 \pmod 4$, then $\left(\frac{\sqrt{p},\, \delta}{\pi_{1}}\right)=-\left(\frac{\sqrt{p},\, \delta}{\tilde{\pi}_{1}}\right)=\left(\frac{y}{q}\right)\left(\frac{q}{p}\right)_{4}.$
			\\
		\end{itemize}
		\item
		\begin{itemize}
			\item  If $q\equiv 1 \pmod 4$, then $\left(\frac{2(x+y\sqrt{p}),\, \delta}{\pi_{1}}\right)=\left(\frac{2(x-y\sqrt{p}),\, \delta}{\tilde{\pi}_{1}}\right)=1.$
			\\
			\item 	If $q\equiv 3 \pmod 4$, then $  \left(\frac{2(x+y\sqrt{p}),\, \delta}{\pi_{1}}\right)=-\left(\frac{2(x-y\sqrt{p}),\, \delta}{\tilde{\pi}_{1}}\right)=\left(\frac{y}{q}\right)$.
			\\
		\end{itemize}
		\item $  \left(\frac{2(x-y\sqrt{p}),\,  \delta}{\pi_{1}}\right)=\left(\frac{2(x+y\sqrt{p}),\,  \delta}{\tilde{\pi}_{1}}\right)=\begin{cases} \left(\frac{p}{q}\right)_{4}, & \text{if $ q\equiv 1 \pmod 4$, } \\
		\\
		-\left(\frac{q}{p}\right)_{4}, & \text{if $q\equiv 3 \pmod 4$.}
		\end{cases}$
	\end{enumerate}	
\end{lemma}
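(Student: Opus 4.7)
The plan is to reduce each Hilbert symbol at $\pi_1$ and $\tilde{\pi}_1$ to a quadratic residue symbol in the residue field $\mathbb{F}_q$, by means of the tame-symbol property $\left(\frac{\alpha,\delta}{\pi}\right)=\left[\frac{\alpha}{\pi}\right]^{v_\pi(\delta)}$ when $\alpha$ is a unit at $\pi$ (used already in Lemma~\ref{SHL1}), and then to identify the resulting symbols via Lemma~\ref{SHL3} and biquadratic reciprocity. As preparation I record: (i) the valuations $v_{\pi_1}(\delta)=v_{\tilde{\pi}_1}(\delta)=1$, $v_{\pi_1}(x+y\sqrt{p})=h$, $v_{\pi_1}(x-y\sqrt{p})=0$ and the symmetric statements at $\tilde{\pi}_1$, obtained from $q\mathcal{O}_k=\pi_1\tilde{\pi}_1$, from $\pi_1^h=((x+y\sqrt{p})/2)$ and from the norm relation $(x+y\sqrt{p})(x-y\sqrt{p})=4q^h$; and (ii) the residue-field reductions $\sqrt{p}\equiv -x/y\pmod{\pi_1}$ and $\sqrt{p}\equiv x/y\pmod{\tilde{\pi}_1}$, derived from $x+y\sqrt{p}\in\pi_1$.

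For parts (a), (c), (d), the elements $-1$, $2$, $\sqrt{p}$ are units at $\pi_1$, hence $\left(\frac{\alpha,\delta}{\pi_1}\right)=\left(\frac{\alpha\bmod\pi_1}{q}\right)$. Parts (a) and (c) follow at once. For (d), the substitution $\sqrt{p}\equiv -x/y$ yields $\left(\frac{-xy}{q}\right)$, and Lemma~\ref{SHL3} together with the auxiliary identity $\left(\frac{x}{q}\right)\left(\frac{y}{q}\right)=\left(\frac{p}{q}\right)_4$ (immediate from $(x/y)^2\equiv p\pmod q$) simplifies this case-by-case into the announced biquadratic expressions; the entries at $\tilde{\pi}_1$ are obtained identically.

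For (e) and (f), I would apply bilinearity: $\left(\frac{2(x+y\sqrt{p}),\delta}{\pi_1}\right)=\left(\frac{2,\delta}{\pi_1}\right)\left(\frac{x+y\sqrt{p},\delta}{\pi_1}\right)$. The second factor, with $a=h$ and $b=1$, equals (by the tame formula, interpreted as a Legendre symbol in $\mathbb{F}_q^\times$) the residue of $(-1)^h(x+y\sqrt{p})\,\delta^{-h}$ modulo $\pi_1$. Writing $\delta=q\sqrt{p}$ and using $q^{-h}=4/[(x+y\sqrt{p})(x-y\sqrt{p})]$, this collapses to $-2(\sqrt{p})^{-h}/(x-y\sqrt{p})\bmod\pi_1$, and the substitutions $x-y\sqrt{p}\equiv 2x$, $\sqrt{p}\equiv -x/y$ produce an explicit residue whose Legendre symbol, combined with $\left(\frac{2}{q}\right)$, yields the stated value. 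The twin computation at $\tilde{\pi}_1$ (using $\sqrt{p}\equiv x/y$) handles (f).

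The main obstacle is part (b), because $\varepsilon_p$ admits no explicit form in terms of $x,y$. The sign relation between the two symbols is immediate: Galois equivariance of the quadratic residue symbol, combined with $\sigma(\varepsilon_p)=-\varepsilon_p^{-1}$ and $\sigma(\pi_1)=\tilde{\pi}_1$, forces $\left[\frac{\varepsilon_p}{\tilde{\pi}_1}\right]=\left(\frac{-1}{q}\right)\left[\frac{\varepsilon_p}{\pi_1}\right]$, accounting for the claimed sign in both subcases. To determine the value itself I would parametrize $\varepsilon_p=(t+u\sqrt{p})/2$ with $t^2-pu^2=-4$, so that $\varepsilon_p\equiv (ty-ux)/(2y)\pmod{\pi_1}$ and the sought symbol becomes $\left(\frac{2y(ty-ux)}{q}\right)$; the identification of this quantity with $\left(\frac{p}{q}\right)_4\left(\frac{q}{p}\right)_4$ when $q\equiv 1\pmod 4$ should follow from a Scholz-type rational biquadratic reciprocity applied to $t^2-pu^2=-4$, while the case $q\equiv 3\pmod 4$ reduces to $-\left(\frac{y}{q}\right)$ by manipulations parallel to those of Lemma~\ref{SHL3}. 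As a consistency check I would verify the Hilbert product formula $\prod_v\left(\frac{\varepsilon_p,\delta}{v}\right)=1$, using the already-computed symbols at $p_\infty$, $\tilde{p}_\infty$, $(\sqrt{p})$ from Lemmas~\ref{SHL1} and~\ref{SHL2} together with the dyadic contribution at $2_I$ (accessible from $p\equiv 5\pmod 8$).
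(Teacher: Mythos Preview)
Your reductions for (a), (c), (d), (e), (f) via the tame formula and the explicit congruences $\sqrt{p}\equiv \mp x/y\pmod{\pi_1,\tilde{\pi}_1}$ are correct and amount to a slightly more computational variant of what the paper does. The paper prefers to keep everything inside the residue symbol (for instance writing $2y\sqrt{p}=2x-2(x+y\sqrt{p})$ inside $[\cdot/\pi_1^h]$ rather than reducing $\sqrt{p}$ modulo $\pi_1$), and in (e) it uses the identity $\bigl(\frac{-\delta,\delta}{\pi_1}\bigr)=1$ to trade $2(x+y\sqrt{p})$ for a $\pi_1$-unit, but the content is the same.

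The real issue is part (b). For $q\equiv 1\pmod 4$ the paper simply quotes Scholz's reciprocity law in the form $\bigl[\frac{\varepsilon_p}{\pi_1}\bigr]=\bigl(\frac{p}{q}\bigr)_4\bigl(\frac{q}{p}\bigr)_4$ (Lemmermeyer, Prop.~5.8); your parametrisation $\varepsilon_p=(t+u\sqrt{p})/2$ followed by an appeal to ``Scholz-type reciprocity'' is the same thing, only stated less directly. For $q\equiv 3\pmod 4$, however, your phrase ``manipulations parallel to those of Lemma~\ref{SHL3}'' is not a proof. Lemma~\ref{SHL3} works because the equation $x^2-py^2=4q^h$ contains $q$; the analogous relation $t^2-pu^2=-4$ for $\varepsilon_p$ does not, so there is no direct route from it to a symbol involving $q$ or $y$. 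The paper closes this gap by invoking a separate result of Williams, namely $\bigl[\frac{\varepsilon_p\sqrt{p}}{\pi_1}\bigr]=-\bigl(\frac{q}{p}\bigr)_4$, and then combines it with the already computed value of $\bigl[\frac{\sqrt{p}}{\pi_1}\bigr]$ from (d) to extract $\bigl[\frac{\varepsilon_p}{\pi_1}\bigr]$. Without that input (or an equivalent), the $q\equiv 3\pmod 4$ branch of (b) remains open in your argument; the product-formula check you propose is a good sanity test but cannot by itself pin down the individual symbol at $\pi_1$.
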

\begin{proof}The main tools to prove these equalities are the  properties  cited in \cite[{Ch II \textsection} 7 Proposition 7.4.3, p. 205]{G03} and by applying the bilinear property  of Hilbert symbol, we get	
\begin{enumerate}[(a)]	
		\item $
		\left(\frac{ -1,\, \delta}{\tilde{\pi_{1}}}\right)=\left(\frac{ -1,\, \delta}{\pi_{1}}\right)= \left[\frac{ -1}{\pi_{1}}\right]
		=\left(\frac{ -1}{q}\right)=\begin{cases} 1, & \text{if $q\equiv1 \pmod 4$,} \\
		-1,& \text{if $q\equiv3 \pmod 4$.}
		\end{cases}$
		\item
		\begin{itemize}
			\item  	Suppose $q\equiv 1 \pmod 4$. Since $q\equiv p\equiv 1 \pmod 4$ and $\left(\frac{ p}{q}\right)=1$, then by Scholz's reciprocity law ( \cite[{Ch 5 \textsection} 5.2 Proposition 5.8, p. 160]{L00}), we have\\
			$
			\left(\frac{ \varepsilon_{p},\, \delta}{\pi_{1}}\right)= \left[\frac{  \varepsilon_{p}}{\pi_{1}}\right]=\left(\frac{ p}{q}\right)_{4}\left(\frac{ q}{p}\right)_{4}$, moreover $\left[\frac{  \varepsilon_{p}}{\pi_{1}}\right]\left[\frac{  \varepsilon_{p}}{\tilde{\pi_{1}}}\right] =\left(\frac{ -1}{q}\right)=1$, then $  \left(\frac{ \varepsilon_{p},\, \delta}{\tilde{\pi_{1}}}\right)=\left(\frac{ p}{q}\right)_{4}\left(\frac{ q}{p}\right)_{4}.$
			\item 	If $q\equiv 3 \pmod 4$ we find that $\left(\frac{\varepsilon_{p},\, \delta}{\pi_{1}}\right)= \left[\frac{  \varepsilon_{p}}{\pi_{1}}\right]$ and $\left(\frac{\varepsilon_{p},\, \delta}{\tilde{\pi_{1}}}\right)= \left[\frac{  \varepsilon_{p}}{\tilde{\pi_{1}}}\right]$. \\
			Consequently, $\left[\frac{  \varepsilon_{p}}{\pi_{1}}\right]\left[\frac{  \varepsilon_{p}}{\tilde{\pi_{1}}}\right]=\left[\frac{  \varepsilon_{p}}{\tilde{\pi_{1}}\pi_{1}}\right]=\left[\frac{  \varepsilon_{p}}{qO_{k}}\right] =\left(\frac{N_{k/\mathbb{Q}}(\varepsilon_{p})}{q}\right)=\left(\frac{-1}{q}\right)=-1$. \\
			As $\left[\frac{  \varepsilon_{p}\sqrt{p}}{\pi_{1}}\right]=-\left(\frac{ q}{p}\right)_{4}$ (via \cite{W74}) and $\left[\frac{\varepsilon_{p}\sqrt{p}}{\pi_{1}}\right]=\left[\frac{  -\varepsilon_{p}}{\pi_{1}}\right]\left[\frac{ -\sqrt{p}}{\pi_{1}}\right]=\left(\frac{-1}{q}\right)\left[\frac{  \varepsilon_{p}}{\pi_{1}}\right]\left(\frac{y}{q}\right)\left(\frac{2}{p}\right)\left(\frac{q}{p}\right)_{4}$.\\
			Then $\left[\frac{\varepsilon_{p}}{\pi_{1}}\right]=-\left(\frac{-1}{q}\right)\left(\frac{2}{p}\right)\left(\frac{y}{q}\right)=\left(\frac{2}{p}\right)\left(\frac{y}{q}\right)$.
			This completes the proof.
			\\
		\end{itemize}
		\item 	
		$
		\left(\frac{2,\, \delta}{\pi_{1}}\right)= \left[\frac{ 2}{\pi_{1}}\right]
		=\left(\frac{ 2}{q}\right)$ and
		$\left(\frac{2,\, \delta}{\tilde{\pi_{1}}}\right)= \left[\frac{  2}{\tilde{\pi_{1}}}\right]=\left(\frac{ 2}{q}\right).$\\	
		\item
		\begin{itemize}
			\item  	If $q\equiv 1 \pmod 4$, then\\
			$\left(\frac{\sqrt{p},\, \delta}{\pi_{1}}\right)
			=\left[\frac{\sqrt{p}}{\pi_{1}^{h}}\right]
			=\left[\frac{2y}{\pi_{1}^{h}}\right]\left[\frac{2x-2(x+y\sqrt{p})}{\pi_{1}^{h}}\right]= \left[\frac{2y}{\pi_{1}}\right] \left[\frac{2x}{\pi_{1}}\right]= \left(\frac{y}{q}\right)\left(\frac{x}{q}\right)
			=\left(\frac{p}{q}\right)_{4},
			$
			\\
			and
			
			$ \left(\frac{\sqrt{p},\, \delta}{\tilde{\pi_{1}}}\right)=\left[\frac{\sqrt{p}}{\tilde{\pi_{1}}^{h}}\right]=\left[\frac{2y}{\tilde{\pi_{1}}^{h}}\right]\left[\frac{-2x+2(x-y\sqrt{p})}{\tilde{\pi_{1}}^{h}}\right]= \left[\frac{2y}{\tilde{\pi_{1}}}\right] \left[\frac{-2x}{\tilde{\pi_{1}}}\right]=\left(\frac{y}{q}\right)\left(\frac{ -1}{q}\right)\left(\frac{x}{q}\right)=\left(\frac{p}{q}\right)_{4}.$\\
			\item  In the same way, we show that if $q\equiv 3 \pmod 4$, then\\
			$
			\left(\frac{\sqrt{p},\,  \delta}{\pi_{1}}\right)
			= \left[\frac{\sqrt{p}}{\pi_{1}^{h}}\right]= \left[\frac{2y}{\pi_{1}^{h}}\right] \left[\frac{2y\sqrt{p}}{\pi_{1}^{h}}\right]=\left[\frac{2y}{\pi_{1}}\right] \left[\frac{-2x}{\tilde{\pi_{1}}}\right]=\left(\frac{-1}{q}\right)\left(\frac{y}{q}\right)\left(\frac{x}{q}\right)=
           \left(\frac{y}{q}\right)\left(\frac{q}{p}\right)_{4},$\\
           \\
			$
			\left(\frac{\sqrt{p},\, \delta}{\tilde{\pi_{1}}}\right)=\left[\frac{\sqrt{p}}{\tilde{\pi_{1}}^{h}}\right]= \left[\frac{2y}{\tilde{\pi_{1}}^{h}}\right] \left[\frac{2y\sqrt{p}}{\tilde{\pi_{1}}^{h}}\right]= \left[\frac{2y}{\tilde{\pi_{1}}}\right] \left[\frac{2x}{\tilde{\pi_{1}}}\right]= \left(\frac{y}{q}\right)\left(\frac{x}{q}\right)=-\left(\frac{y}{q}\right)\left(\frac{q}{p}\right)_{4}.$
		\end{itemize}
		\item
		\begin{itemize}
			\item  	We also have that if $q\equiv 1 \pmod 4$, then\\
				$\begin{array}{rcl}
				\left(\frac{2(x+y\sqrt{p}),\,  \delta}{\pi_{1}}\right)
				&=&\left(\frac{2(x+y\sqrt{p}),\,  \delta}{\pi_{1}}\right)\left(\frac{-\sqrt{p}(x-y\sqrt{p}),\,  \delta}{\pi_{1}}\right)\left(\frac{-\sqrt{p}(x-y\sqrt{p}),\,  \delta}{\pi_{1}}\right)\\
				&=&\left(\frac{-8q^{h}\sqrt{p},\,  \delta}{\pi_{1}}\right)\left(\frac{-\sqrt{p}(x-y\sqrt{p}),\,  \delta}{\pi_{1}}\right)\\
				&=&\left(\frac{-\delta,\,  \delta}{\pi_{1}}\right)\left(\frac{-2\sqrt{p}(x-y\sqrt{p}),\,  \delta}{\pi_{1}}\right)
				=\left[\frac{-2\sqrt{p}(x-y\sqrt{p})}{\pi_{1}}\right]  \\
				&=&\left[\frac{-2\sqrt{p}(x-y\sqrt{p})}{\pi_{1}^{h}}\right]
				=\left[\frac{-\sqrt{p}}{\pi_{1}^{h}}\right]\left[\frac{2(x-y\sqrt{p})}{\pi_{1}^{h}}\right] \\
				&=&\left[\frac{-\sqrt{p}}{\pi_{1}^{h}}\right]\left[\frac{4x-2(x+y\sqrt{p})}{\pi_{1}^{h}}\right]	
				=\left[\frac{-\sqrt{p}}{\pi_{1}^{h}}\right]\left[\frac{4x}{\pi_{1}^{h}}\right]\\	
				&=&\left[\frac{-\sqrt{p}}{\pi_{1}^{h}}\right]\left[\frac{x}{\pi_{1}}\right]	
				=\left(\frac{p}{q}\right)_{4}\left(\frac{ x}{q}\right)
				=\left(\frac{p}{q}\right)_{4}\left(\frac{p}{q}\right)_{4}
				=1.
				\end{array}$	
			
				$\begin{array}{rcl}
				\left(\frac{2(x-y\sqrt{p}),\,  \delta}{\tilde{\pi_{1}}}\right)
				&=&\left(\frac{2(x-y\sqrt{p}),\,  \delta}{\tilde{\pi_{1}}}\right)\left(\frac{-\sqrt{p}(x+y\sqrt{p}),\, \delta}{\tilde{\pi_{1}}}\right)\left(\frac{-\sqrt{p}(x+y\sqrt{p}),\,  \delta}{\tilde{\pi_{1}}}\right)\\
				&=&\left(\frac{-\delta,\,  \delta}{\tilde{\pi_{1}}}\right)\left(\frac{-2\sqrt{p}(x+y\sqrt{p}),\,  \delta}{\tilde{\pi_{1}}}\right)
				=\left[\frac{-2\sqrt{p}(x+y\sqrt{p})}{\tilde{\pi_{1}}}\right]  \\
				&=&\left[\frac{-\sqrt{p}}{\tilde{\pi_{1}}}\right]\left[\frac{2(x+y\sqrt{p})}{\tilde{\pi_{1}}}\right]
				=\left[\frac{-\sqrt{p}}{\tilde{\pi_{1}}^{h}}\right]\left[\frac{2(x+y\sqrt{p})}{\tilde{\pi_{1}}^{h}}\right]
				=\left[\frac{-\sqrt{p}}{\tilde{\pi_{1}}^{h}}\right]\left[\frac{4x-2(x-y\sqrt{p})}{\tilde{\pi_{1}}^{h}}\right]\\	
				&=&\left[\frac{-\sqrt{p}}{\tilde{\pi_{1}}^{h}}\right]\left[\frac{4x}{\tilde{\pi_{1}}^{h}}\right]	
				=\left[\frac{-\sqrt{p}}{\tilde{\pi_{1}}^{h}}\right]\left[\frac{x}{\tilde{\pi_{1}}}\right]\\	
				&=&\left(\frac{p}{q}\right)_{4}\left(\frac{ x}{q}\right)
				=\left(\frac{p}{q}\right)_{4}\left(\frac{p}{q}\right)_{4}=1.
				\end{array}$	
			\item If $q\equiv 3 \pmod 4$, then\\
			$\begin{array}{rcl}
			\left(\frac{2(x+y\sqrt{p}),\,  \delta}{\pi_{1}}\right)
			&=&\left(\frac{2(x+y\sqrt{p}),\,  \delta}{\pi_{1}}\right)\left(\frac{-2\sqrt{p}(x-y\sqrt{p}),\,  \delta}{\pi_{1}}\right)\left(\frac{-2\sqrt{p}(x-y\sqrt{p}),\,  \delta}{\pi_{1}}\right)\\
			&=&\left(\frac{-4q^{h}\sqrt{p},\,  \delta}{\pi_{1}}\right)\left(\frac{-2\sqrt{p}(x-y\sqrt{p}),\,  \delta}{\pi_{1}}\right)\\
			&=&\left(\frac{-\delta,\,  \delta}{\pi_{1}}\right)\left(\frac{-2\sqrt{p}(x-y\sqrt{p}),\,  \delta}{\pi_{1}}\right)\\
			&=&\left[\frac{-2\sqrt{p}(x-y\sqrt{p})}{\pi_{1}}\right] =\left[\frac{-2\sqrt{p}(x-y\sqrt{p})}{\pi_{1}^{h}}\right]  \\			&=&\left[\frac{-\sqrt{p}}{\pi_{1}^{h}}\right]\left[\frac{2(x-y\sqrt{p})}{\pi_{1}^{h}}\right]=
            \left[\frac{-\sqrt{p}}{\pi_{1}^{h}}\right]\left[\frac{4x-2(x+y\sqrt{p})}{\pi_{1}^{h}}\right]\\	
			&=&\left[\frac{-\sqrt{p}}{\pi_{1}^{h}}\right]\left[\frac{4x}{\pi_{1}^{h}}\right]= \left[\frac{-\sqrt{p}}{\pi_{1}^{h}}\right]\left[\frac{x}{\pi_{1}}\right]\\	
			&=&\left(\frac{y}{q}\right)\left(\frac{x}{q}\right)\left(\frac{x}{q}\right)=\left(\frac{y}{q}\right).
			\end{array}$\\	
				
			$\begin{array}{rcl}
			\left(\frac{2(x-y\sqrt{p}),\,  \delta}{\tilde{\pi_{1}}}\right)
			&=&\left(\frac{2(x-y\sqrt{p}),\,  \delta}{\tilde{\pi_{1}}}\right)\left(\frac{-2\sqrt{p}(x+y\sqrt{p}),\,  \delta}{\tilde{\pi_{1}}}\right)\left(\frac{-2\sqrt{p}(x+y\sqrt{p}),\,  \delta}{\tilde{\pi_{1}}}\right)\\
			&=&\left(\frac{-\delta,\,  \delta}{\tilde{\pi_{1}}}\right)\left(\frac{-2\sqrt{p}(x+y\sqrt{p}),\,  \delta}{\tilde{\pi_{1}}}\right)
			=\left[\frac{-2\sqrt{p}(x+y\sqrt{p})}{\tilde{\pi_{1}}}\right]  \\
			&=&\left[\frac{-\sqrt{p}}{\tilde{\pi_{1}}}\right]\left[\frac{2(x+y\sqrt{p})}{\tilde{\pi_{1}}}\right]
			=\left[\frac{-\sqrt{p}}{\tilde{\pi_{1}}^{h}}\right]\left[\frac{2(x+y\sqrt{p})}{\tilde{\pi_{1}}^{h}}\right] \\
			&=&\left[\frac{x}{\tilde{\pi_{1}}^{h}}\right]\left[\frac{4x-2(x-y\sqrt{p})}{\tilde{\pi_{1}}^{h}}\right]	
			=\left[\frac{-\sqrt{p}}{\tilde{\pi_{1}}^{h}}\right]\left[\frac{4x}{\tilde{\pi_{1}}^{h}}\right]\\	
			&=&\left[\frac{-\sqrt{p}}{\tilde{\pi_{1}^{h}}}\right]\left[\frac{x}{\tilde{\pi_{1}}}\right]
			=-\left(\frac{y}{q}\right)\left(\frac{x}{q}\right)\left(\frac{x}{q}\right)
			=-\left(\frac{y}{q}\right).
			\end{array}$	
		\end{itemize}
	
	\item 	With similar calculations we prove that
          \begin{itemize}
          	\item  	If $q\equiv 1 \pmod 4$, then\\
			$\left(\frac{2(x+y\sqrt{p}),\, \delta}{\tilde{\pi_{1}}}\right) =\left[\frac{2(x+y\sqrt{p})}{\tilde{\pi_{1}}}\right]=\left[\frac{4x}{\tilde{\pi_{1}}}\right]=\left(\frac{x}{q}\right)=\left(\frac{p}{q}\right)_{4},$\\		
			and
			
			$\left(\frac{2(x-y\sqrt{p}),\, \delta}{\pi_{1}}\right)=\left[\frac{2(x-y\sqrt{p})}{\pi_{1}}\right]=\left[\frac{4x}{\pi_{1}}\right]=\left(\frac{x}{q}\right)=\left(\frac{p}{q}\right)_{4}.$
		\item If $q\equiv 3 \pmod 4$, then\\
		$  \left(\frac{2(x+y\sqrt{p}),\, \delta}{\tilde{\pi_{1}}}\right)=\left[\frac{2(x+y\sqrt{p})}{\tilde{\pi_{1}}}\right] =\left[\frac{4x}{\tilde{\pi_{1}}}\right]=\left(\frac{ x}{q}\right)=\left(\frac{2}{p}\right)\left(\frac{q}{p}\right)_{4}=-\left(\frac{q}{p}\right)_{4},$	\\	
		   and
		
		$
		   \left(\frac{2(x-y\sqrt{p}),\, \delta}{\pi_{1}}\right)=\left[\frac{2(x-y\sqrt{p})}{\pi_{1}}\right] =\left[\frac{4x}{\pi_{1}}\right]=\left(\frac{ x}{q}\right)=\left(\frac{2}{p}\right)\left(\frac{q}{p}\right)_{4}=-\left(\frac{q}{p}\right)_{4}.$
		\end{itemize}	
	\end{enumerate}	
\end{proof}

\begin{lemma}
	Keeping  previous hypotheses and notations, then
	\begin{enumerate}[\rm(a)]
		\item $\left(\frac{\varepsilon_{p},\, \delta}{2_{I}}\right)=\begin{cases} 1, & \text{if $ q\equiv 1 \pmod 4$}, \\
		-1, & \text{if $ q\equiv 3 \pmod 4$.}
		\end{cases}$
        \item $\left(\frac{2,\, \delta}{2_{I}}\right)=\left(\frac{-1,\, \delta}{2_{I}}\right)=-1,$ and $\left(\frac{ \sqrt{p},\, \delta}{2_{I}}\right)=\begin{cases} -1, & \text{if $ q\equiv 1 \pmod 4$}, \\
        1, & \text{if $ q\equiv 3 \pmod 4$.}
        \end{cases}$	
        \item
    	\begin{itemize}
    	\item  If $q\equiv 1 \pmod 4$, then $\left(\frac{2(x+y\sqrt{p}),\, \delta}{2_{I}}\right)=\left(\frac{2(x-y\sqrt{p}),\, \delta}{2_{I}}\right)= \left(\frac{ p}{q}\right)_{4}\left(\frac{ q}{p}\right)_{4}.$
    		\\
    		
    	\item 	If $q\equiv 3 \pmod 4$, then $  \left(\frac{2(x+y\sqrt{p}),\, \delta}{2_{I}}\right)=-\left(\frac{2(x-y\sqrt{p}),\, \delta}{2_{I}}\right)=-\left(\frac{y}{q}\right)$.
    	\end{itemize}		
	\end{enumerate}		
\end{lemma}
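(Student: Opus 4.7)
The strategy is to invoke Hilbert reciprocity, i.e.\ the product formula
$$\prod_{v}\left(\frac{a,\,\delta}{v}\right) = 1,$$
where $v$ runs over all places of $k$ (see \cite[Ch.~II, \textsection 7]{G03}). Since each local symbol takes values in $\{\pm 1\}$, the value at $2_I$ is the product of the values at every other place. Thus for each $a \in \{\varepsilon_{p},\,-1,\,2,\,\sqrt{p},\,2(x\pm y\sqrt{p})\}$, it suffices to collect the symbols $\left(\frac{a,\,\delta}{v}\right)$ at all $v\neq 2_I$ and multiply them.

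The only places where $\left(\frac{a,\,\delta}{v}\right)$ can be non-trivial are contained in
$$S = \{\,2_I,\,(\sqrt{p}),\,\pi_{1},\,\tilde{\pi}_{1},\,p_{\infty},\,\tilde{p}_{\infty}\,\},$$
because $(\delta)=(\sqrt{p})\pi_{1}\tilde{\pi}_{1}$, each of $\varepsilon_{p},-1,2,\sqrt{p}$ is supported inside $\{2_I,(\sqrt{p})\}$, and the principal ideals $(2(x+y\sqrt{p}))=2_I^{2}\pi_{1}^{h}$, $(2(x-y\sqrt{p}))=2_I^{2}\tilde{\pi}_{1}^{h}$ contribute no primes outside $S$. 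Furthermore, under the embedding $i_{\tilde{p}_{\infty}}$ the element $\delta=q\sqrt{p}$ is positive, so $\left(\frac{a,\,\delta}{\tilde{p}_{\infty}}\right)=1$ for every $a$ under consideration. Hence the product formula reduces to
$$\left(\frac{a,\,\delta}{2_I}\right) \;=\; \left(\frac{a,\,\delta}{(\sqrt{p})}\right)\left(\frac{a,\,\delta}{\pi_{1}}\right)\left(\frac{a,\,\delta}{\tilde{\pi}_{1}}\right)\left(\frac{a,\,\delta}{p_{\infty}}\right).$$

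The remaining work is simply to insert, for each relevant $a$, the four factors on the right-hand side, all of which have been computed in Lemmas~\ref{SHL1}, \ref{SHL2}, \ref{SHL4} and the preceding lemma. For $a=\varepsilon_{p}$ one gets $(-1)$ from $(\sqrt{p})$, $(-1)$ from $p_{\infty}$, and either $+1$ or $-1$ from the $\pi_{1}\tilde{\pi}_{1}$ pair depending on whether $q\equiv 1$ or $3\pmod 4$, which delivers (a). The same accounting produces (b) for $a\in\{-1,2,\sqrt{p}\}$, where the $\pi_{1}\tilde{\pi}_{1}$ pair contributes $\left(\frac{-1}{q}\right)^{2}=1$, $\left(\frac{2}{q}\right)^{2}=1$, and (in the $\sqrt{p}$ case) a value that depends on $q\bmod 4$. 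For $a=2(x\pm y\sqrt{p})$ in (c), the contributions at $(\sqrt{p})$ and $p_{\infty}$ are the same for both signs, while the $\pi_{1}$ and $\tilde{\pi}_{1}$ contributions are swapped under $y\leftrightarrow -y$; this transparently produces the symmetric answer when $q\equiv 1\pmod 4$ and the antisymmetric answer $\mp\left(\frac{y}{q}\right)$ when $q\equiv 3\pmod 4$.

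The main (and only) obstacle is sign bookkeeping: one must carefully track which of the contributing symbols at $\pi_{1}$ and $\tilde{\pi}_{1}$ are symmetric and which are antisymmetric under conjugation, in order to verify that their product with the symbols at $(\sqrt{p})$ and $p_{\infty}$ exactly matches the stated values. No new local calculation at $2_I$ itself is needed, which is precisely the advantage of this approach over direct computation in the completion.
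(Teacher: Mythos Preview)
Your proposal is correct and follows precisely the approach the paper uses: the paper's own proof reads simply ``We prove these results by using the product formula for Hilbert symbol and previous lemmas.'' Your write-up supplies the details the paper omits---identifying the finite set $S$ of places where a non-trivial contribution can occur, disposing of $\tilde{p}_{\infty}$ via positivity of $\delta$ there, and then reading off the value at $2_I$ as the product of the previously computed local symbols---so there is nothing to correct or add.
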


\begin{proof}We prove these results by using  the product formula for  Hilbert symbol and previous lemmas.	
\end{proof}
\begin{remark} The above results can be summered in the following tables. For  $q\equiv 1 \pmod 4$, we have
     \begin{center}
     	\begin{tabular}{||c||c|c|c|c|c||}
     		\hline\hline
     		\diagbox{element}{prime} &$\pi_{1}$ &$\tilde{\pi_{1}}$&$2_{I}$&$(\sqrt{p})$ &$ p_{\infty}$\\
     		\hline\hline
     		$ 2(x+y\sqrt{p})$ &1 &$\left(\frac{p}{q}\right)_{4}$ &$\left(\frac{ p}{q}\right)_{4}\left(\frac{ q}{p}\right)_{4}$ &$\left(\frac{ q}{p}\right)_{4}$ & 1\\
     		\hline
     		$ 2(x-y\sqrt{p})$ &$\left(\frac{p}{q}\right)_{4}$ &1 &$\left(\frac{ p}{q}\right)_{4}\left(\frac{ q}{p}\right)_{4}$  &$\left(\frac{ q}{p}\right)_{4}$ &1 \\
     		\hline
     		$ 2$ &$\left(\frac{ 2}{q}\right)$ &$\left(\frac{ 2}{q}\right)$  & -1 & -1 &1 \\
     		\hline
     		$\sqrt{p}$ &$\left(\frac{p}{q}\right)_{4}$&$\left(\frac{p}{q}\right)_{4}$ & -1& 1  & -1 \\
     		\hline
     		-1 &1 &1&-1& 1 & -1\\
     		\hline
     		
     		$\varepsilon_{p}$ &$\left(\frac{ p}{q}\right)_{4}\left(\frac{ q}{p}\right)_{4}$  &$\left(\frac{ p}{q}\right)_{4}\left(\frac{ q}{p}\right)_{4}$ & 1 & -1 &  -1\\
     		\hline\hline  			  			  			
     	\end{tabular}
     \end{center}
 For  $q\equiv 3 \pmod 4$, we have $(p^{\frac{(q+1)}{8}})^{4}\equiv p^{\frac{(q+1)}{2}}\equiv p^{\frac{(q-1+2)}{2}}\equiv p^{\frac{(q-1)}{2}+1}\equiv \left(\frac{ p}{q}\right)p \equiv p\pmod q$, then $p^{\frac{(q+1)}{8}}$ is a solution to $x^{4}\equiv p \pmod q$, hence $\left(\frac{ p}{q}\right)_{4}=1$.
\begin{center}
    	\begin{tabular}{||c||c|c|c|c|c||}
    		\hline\hline
    		\diagbox{element}{prime} &$\pi_{1}$ &$\tilde{\pi_{1}}$&$2_{I}$&$(\sqrt{p})$ &$ p_{\infty}$\\
    		\hline\hline
    		$ 2(x+y\sqrt{p})$ &$\left(\frac{y}{q}\right)$ &$-\left(\frac{q}{p}\right)_{4}$ &$-\left(\frac{y}{q}\right)$ &$\left(\frac{ q}{p}\right)_{4}$ & 1\\
    		\hline
    		$ 2(x-y\sqrt{p})$ &$-\left(\frac{q}{p}\right)_{4}$ &$-\left(\frac{y}{q}\right)$ &$\left(\frac{y}{q}\right)$  &$\left(\frac{ q}{p}\right)_{4}$ &1 \\
    		\hline
    		$ 2$ &$\left(\frac{ 2}{q}\right)$ &$\left(\frac{ 2}{q}\right)$  & -1 & -1 &1 \\
    		\hline
    		$\sqrt{p}$ &$\left(\frac{y}{q}\right)\left(\frac{q}{p}\right)_{4}$ &$- \left(\frac{y}{q}\right)\left(\frac{q}{p}\right)_{4}$ & 1& 1  & -1 \\
    		\hline
    		-1 &-1 &-1&-1& 1 & -1\\
    		\hline
    		
    		$\varepsilon_{p}$ &$-\left(\frac{y}{q}\right)$  &$\left(\frac{y}{q}\right)$ & -1 & -1 &  -1\\
    		\hline\hline  			  			  			
    	\end{tabular}
\end{center}

\end{remark}

  \begin{proof}[\rm\textbf{Proof of Theorem B}]~\\
  	By assuming  $q$ is a prime number satisfying  $\left(\frac{p}{q}\right)=1$, the  finite primes of $k$ ramifying in $K$ are:  $\pi_{1}$, $\tilde{\pi}_{1}$, $ 2_{I}$ and $(\sqrt{p})$.
  	So there exist prime ideals $\mathcal{H}_{1}, \mathcal{H}_{2}, \mathcal{H}_{3}, \mathcal{H}_{4}$ of $K$  such that
  	$$\mathcal{H}_{1}^{2}=\pi_{1}O_{K}, \mathcal{H}_{2}^{2}=\tilde{\pi}_{1}O_{K}, \mathcal{H}_{3}^{2}=2_{I}O_{K}, \mathcal{H}_{4}^{2}=\sqrt{p}O_{K}. $$
  	
   Therefore, we have

\begin{itemize}
	\item $\mathcal{H}_{1}^{h}\sigma(\mathcal{H}_{1}^{h})=\mathcal{H}_{1}^{h}\mathcal{H}_{1}^{h}O_{K}=\pi_{1}^{h}O_{K}=((x+y\sqrt{p})/2)O_{K}, $
	\item $\mathcal{H}_{2}^{h}\sigma(\mathcal{H}_{2}^{h})=\mathcal{H}_{2}^{h}\mathcal{H}_{2}^{h}O_{K}=\tilde{\pi}_{1}^{h}O_{K}=((x-y\sqrt{p})/2)O_{K}, $
	\item  $\mathcal{H}_{3}\sigma(\mathcal{H}_{3})=\mathcal{H}_{3}\mathcal{H}_{3}O_{K}=2_{I}O_{K}=2O_{K}, $
	\item  $\mathcal{H}_{4}\sigma(\mathcal{H}_{4})=\mathcal{H}_{4}\mathcal{H}_{4}O_{K}=\sqrt{p}O_{K}=\sqrt{p}O_{K}. $
\end{itemize}
where $\sigma$ is the generator of the Galois group of $K/k$, and $h$ the class number of $k$.\\

Moreover, we know from Lemma $\ref{SHL1}$ that $-1$, $\varepsilon_{p}$, $-\varepsilon_{p}\notin E_{k}\cap N_{K/k}(K)$.
Then the $ 5\times(4 + 2)$ generalized R\'edei's matrix of Hilbert symbols \eqref{RMG}, can be written as
$$ R_{K/k} = \begin{pmatrix}
\scriptstyle \left(\frac{2(x+y\sqrt{p}),\, \delta}{\pi_{1}}\right)& \scriptstyle \left(\frac{2(x-y\sqrt{p}),\, \delta}{\pi_{1}}\right)& \scriptstyle \left(\frac{ 2,\, \delta}{\pi_{1}}\right)& \scriptstyle \left(\frac{ \sqrt{p},\, \delta}{\pi_{1}}\right)&
\scriptstyle \left(\frac{ -1,\, \delta}{\pi_{1}}\right)&
\scriptstyle \left(\frac{ \varepsilon_{p},\, \delta}{\pi_{1}}\right) \\
\scriptstyle \left(\frac{2(x+y\sqrt{p}),\, \delta}{\tilde{\pi}_{1}}\right)& \scriptstyle \left(\frac{ 2(x-y\sqrt{p}),\, \delta}{\tilde{\pi}_{1}}\right)& \scriptstyle \left(\frac{ 2,\, \delta}{\tilde{\pi}_{1}}\right)& \scriptstyle \left(\frac{ \sqrt{p},\, \delta}{\tilde{\pi}_{1}}\right)& \scriptstyle \left(\frac{ -1,\, \delta}{\tilde{\pi}_{1}}\right)& \scriptstyle \left(\frac{\varepsilon_{p},\, \delta}{\tilde{\pi}_{1}}\right)  \\
\scriptstyle \left(\frac{2(x+y\sqrt{p}),\, \delta}{2_{I}}\right)&
\scriptstyle \left(\frac{ 2(x-y\sqrt{p}),\, \delta}{2_{I}}\right) &
\scriptstyle \left(\frac{ 2,\, \delta}{2_{I}}\right)&
\scriptstyle \left(\frac{ \sqrt{p},\, \delta}{2_{I}}\right) & \scriptstyle \left(\frac{ -1,\, \delta}{2_{I}}\right) &
\scriptstyle \left(\frac{\varepsilon_{p},\, \delta}{2_{I}}\right) \\
\scriptstyle \left(\frac{2(x+y\sqrt{p}),\, \delta}{\sqrt{p}}\right)&
\scriptstyle \left(\frac{ 2(x-y\sqrt{p}),\, \delta}{\sqrt{p}}\right)& \scriptstyle \left(\frac{ 2,\, \delta}{\sqrt{p}}\right)& \scriptstyle \left(\frac{ \sqrt{p},\, \delta}{\sqrt{p}}\right)&
\scriptstyle \left(\frac{ -1,\, \delta}{\sqrt{p}}\right)& \scriptstyle \left(\frac{ \varepsilon_{p},\, \delta}{\sqrt{p}}\right)\\ 	
\scriptstyle \left(\frac{2(x+y\sqrt{p}),\, \delta}{p_{\infty}}\right)& \scriptstyle \left(\frac{2(x-y\sqrt{p}),\, \delta}{p_{\infty}}\right)& \scriptstyle \left(\frac{ 2,\, \delta}{p_{\infty}}\right)& \scriptstyle \left(\frac{ \sqrt{p},\, \delta}{p_{\infty}}\right)& \scriptstyle \left(\frac{ -1,\, \delta}{p_{\infty}}\right)& \scriptstyle \left(\frac{\varepsilon_{p},\, \delta}{p_{\infty}}\right) \\	

\end{pmatrix}.$$
We consider the above matrix with coefficients in $\mathbb{F}_{2}$ by replacing  $1$ by $0$ and  $-1$ by $1$.
Consequently,  we can fill the R\'edei matrix according to the values of the prime number $q$.
Suppose that $q\equiv 1 \pmod 8$, then $\left(\frac{2}{q}\right)=1.$
\begin{center}
	
	\begin{tabular}{c||r}
		\hline\hline
		$\bullet$ If $\bf {\left(\frac{p}{q}\right)_{4}=\left(\frac{ q}{p}\right)_{4}=1}$ &$\bullet$ If $\bf {\left(\frac{p}{q}\right)_{4}=\left(\frac{ q}{p}\right)_{4}=-1}$ \\
		\hline
		$ R_{K/k} =\begin{pmatrix}
		\scriptstyle 0 & \scriptstyle 0 & \scriptstyle 0  & \scriptstyle 0 & \scriptstyle 0 & \scriptstyle 0 \\
		\scriptstyle 0 & \scriptstyle 0 & \scriptstyle 0 & \scriptstyle 0 & \scriptstyle 0 & \scriptstyle 0  \\
		\scriptstyle 0 & \scriptstyle 0 & \scriptstyle 1 & \scriptstyle 1 & \scriptstyle 1 & \scriptstyle 0  \\
		\scriptstyle 0 & \scriptstyle 0 & \scriptstyle 1 & \scriptstyle 0 & \scriptstyle 0 & \scriptstyle 1  \\
		\scriptstyle 0 & \scriptstyle 0 &\scriptstyle 0 & \scriptstyle 1 & \scriptstyle 1 & \scriptstyle 1
		\end{pmatrix}, $ & $R_{K/k} = \begin{pmatrix}
		\scriptstyle 0 & \scriptstyle 1 & \scriptstyle 0  & \scriptstyle 1 & \scriptstyle 0 & \scriptstyle 0 \\
		\scriptstyle 1 & \scriptstyle 0 & \scriptstyle 0 & \scriptstyle 1 & \scriptstyle 0 & \scriptstyle 0  \\
		\scriptstyle 0 & \scriptstyle 0 & \scriptstyle 1 & \scriptstyle 1 & \scriptstyle 1 & \scriptstyle 0  \\
		\scriptstyle 1 & \scriptstyle 1 & \scriptstyle 1 & \scriptstyle 0 & \scriptstyle 0 & \scriptstyle 1  \\
		\scriptstyle 0 & \scriptstyle 0 &\scriptstyle 0 & \scriptstyle 1 & \scriptstyle 1 & \scriptstyle 1
		\end{pmatrix}, $  \\
		
		\hline\hline
	\end{tabular}
	\label{tab:hresult}
\end{center}

\begin{center}
	
	\begin{tabular}{c||r}
		\hline\hline
		$\bullet$ If $\bf {\left(\frac{ p}{q}\right)_{4}=-\left(\frac{ q}{p}\right)_{4}=1}$ & $\bullet$ If $\bf {\left(\frac{ p}{q}\right)_{4}=-\left(\frac{ q}{p}\right)_{4}=-1}$ \\
		\hline
		 $ R_{K/k} =\begin{pmatrix}
		 \scriptstyle 0 & \scriptstyle 0 & \scriptstyle 0  & \scriptstyle 0 & \scriptstyle 0 & \scriptstyle 1 \\
		 \scriptstyle 0 & \scriptstyle 0 & \scriptstyle 0 & \scriptstyle 0 & \scriptstyle 0 & \scriptstyle 1  \\
		 \scriptstyle 1 & \scriptstyle 1 & \scriptstyle 1 & \scriptstyle 1 & \scriptstyle 1 & \scriptstyle 0  \\
		 \scriptstyle 1 & \scriptstyle 1 & \scriptstyle 1 & \scriptstyle 0 & \scriptstyle 0 & \scriptstyle 1  \\
		 \scriptstyle 0 & \scriptstyle 0 &\scriptstyle 0 & \scriptstyle 1 & \scriptstyle 1 & \scriptstyle 1
		 \end{pmatrix}, $
		 &$R_{K/k} =\begin{pmatrix}
		 \scriptstyle 0 & \scriptstyle 1 & \scriptstyle 0  & \scriptstyle 1 & \scriptstyle 0 & \scriptstyle 1 \\
		 \scriptstyle 1 & \scriptstyle 0 & \scriptstyle 0 & \scriptstyle 1 & \scriptstyle 0 & \scriptstyle 1  \\
		 \scriptstyle 1 & \scriptstyle 1 & \scriptstyle 1 & \scriptstyle 1 & \scriptstyle 1 & \scriptstyle 0  \\
		 \scriptstyle 0 & \scriptstyle 0 & \scriptstyle 1 & \scriptstyle 0 & \scriptstyle 0 & \scriptstyle 1  \\
		 \scriptstyle 0 & \scriptstyle 0 &\scriptstyle 0 & \scriptstyle 1 & \scriptstyle 1 & \scriptstyle 1
		 \end{pmatrix}. $  \\
		
		\hline\hline
	\end{tabular}
	\label{tab:hresult}
\end{center}

Consequently,  $\mathrm{rank}(R_{K/k})=\begin{cases} 4, & \text{if $\left(\frac{ p}{q}\right)_{4}=-1$, } \\
3, & \text{if $\left(\frac{ p}{q}\right)_{4}=-\left(\frac{ q}{p}\right)_{4}=1$, } \\

2, & \text{if $\left(\frac{ p}{q}\right)_{4}=\left(\frac{ q}{p}\right)_{4}=1$. }
\end{cases}$
	
Thus from the $4$-rank formula of $Cl(K)$ (see, \eqref{4RF}),  we obtain 
$$ r_{4}(K)=\begin{cases} 0, & \text{if $\left(\frac{ p}{q}\right)_{4}=-1$, } \\
1, & \text{if $\left(\frac{ p}{q}\right)_{4}=-\left(\frac{ q}{p}\right)_{4}=1$, } \\
2, & \text{if $\left(\frac{ p}{q}\right)_{4}=\left(\frac{ q}{p}\right)_{4}=1$. }
\end{cases}$$

Suppose that $q\equiv 5 \pmod 8$, then $\left(\frac{2}{q}\right)=-1. $
\begin{center}
\begin{tabular}{c||r}
	\hline\hline
$\bullet$ If $\bf {\left(\frac{p}{q}\right)_{4}=\left(\frac{ q}{p}\right)_{4}=1}$ & $\bullet$ If $\bf {\left(\frac{p}{q}\right)_{4}=\left(\frac{ q}{p}\right)_{4}=-1}$\\
	\hline
	$ R_{K/k} =\begin{pmatrix}
	\scriptstyle 0 & \scriptstyle 0 & \scriptstyle 1  & \scriptstyle 0 & \scriptstyle 0 & \scriptstyle 0 \\
	\scriptstyle 0 & \scriptstyle 0 & \scriptstyle 1 & \scriptstyle 0 & \scriptstyle 0 & \scriptstyle 0  \\
	\scriptstyle 0 & \scriptstyle 0 & \scriptstyle 1 & \scriptstyle 1 & \scriptstyle 1 & \scriptstyle 0  \\
	\scriptstyle 0 & \scriptstyle 0 & \scriptstyle 1 & \scriptstyle 0 & \scriptstyle 0 & \scriptstyle 1  \\
	\scriptstyle 0 & \scriptstyle 0 &\scriptstyle 0 & \scriptstyle 1 & \scriptstyle 1 & \scriptstyle 1  \\
	\end{pmatrix}, $
	& $R_{K/k} = \begin{pmatrix}
	\scriptstyle 0 & \scriptstyle 1 & \scriptstyle 1  & \scriptstyle 1 & \scriptstyle 0 & \scriptstyle 0 \\
	\scriptstyle 1 & \scriptstyle 0 & \scriptstyle 1 & \scriptstyle 1 & \scriptstyle 0 & \scriptstyle 0  \\
	\scriptstyle 0 & \scriptstyle 0 & \scriptstyle 1 & \scriptstyle 1 & \scriptstyle 1 & \scriptstyle 0  \\
	\scriptstyle 1 & \scriptstyle 1 & \scriptstyle 1 & \scriptstyle 0 & \scriptstyle 0 & \scriptstyle 1  \\
	\scriptstyle 0 & \scriptstyle 0 &\scriptstyle 0 & \scriptstyle 1 & \scriptstyle 1 & \scriptstyle 1  \\
	
	\end{pmatrix}, $ \\
	
	\hline\hline
\end{tabular}
\end{center}

\begin{center}
	\begin{tabular}{c||r}
		\hline\hline
		$\bullet$ If $\bf {\left(\frac{ p}{q}\right)_{4}=-\left(\frac{ q}{p}\right)_{4}=1}$ & $\bullet$ If $\bf {\left(\frac{ p}{q}\right)_{4}=-\left(\frac{ q}{p}\right)_{4}=-1}$ \\
		\hline
		$ R_{K/k} =\begin{pmatrix}
		\scriptstyle 0 & \scriptstyle 0 & \scriptstyle 1  & \scriptstyle 0 & \scriptstyle 0 & \scriptstyle 1 \\
		\scriptstyle 0 & \scriptstyle 0 & \scriptstyle 1 & \scriptstyle 0 & \scriptstyle 0 & \scriptstyle 1  \\
		\scriptstyle 1 & \scriptstyle 1 & \scriptstyle 1 & \scriptstyle 1 & \scriptstyle 1 & \scriptstyle 0  \\
		\scriptstyle 1 & \scriptstyle 1 & \scriptstyle 1 & \scriptstyle 0 & \scriptstyle 0 & \scriptstyle 1  \\
		\scriptstyle 0 & \scriptstyle 0 &\scriptstyle 0 & \scriptstyle 1 & \scriptstyle 1 & \scriptstyle 1  \\
		
		\end{pmatrix}, $
		&$R_{K/k} =\begin{pmatrix}
		\scriptstyle 0 & \scriptstyle 1 & \scriptstyle 1 & \scriptstyle 1 & \scriptstyle 0 & \scriptstyle 1 \\
		\scriptstyle 1 & \scriptstyle 0 & \scriptstyle 1 & \scriptstyle 1 & \scriptstyle 0 & \scriptstyle 1  \\
		\scriptstyle 1 & \scriptstyle 1 & \scriptstyle 1 & \scriptstyle 1 & \scriptstyle 1 & \scriptstyle 0  \\
		\scriptstyle 0 & \scriptstyle 0 & \scriptstyle 1 & \scriptstyle 0 & \scriptstyle 0 & \scriptstyle 1  \\
		\scriptstyle 0 & \scriptstyle 0 &\scriptstyle  0 & \scriptstyle 1 & \scriptstyle 1 & \scriptstyle 1  \\
		
		\end{pmatrix}. $  \\
		
		\hline\hline
	\end{tabular}
\end{center}
Consequently,  $\mathrm{rank}(R_{K/k})=\begin{cases} 3, & \text{if $\left(\frac{ p}{q}\right)_{4}=1$, } \\
4, & \text{if $\left(\frac{ p}{q}\right)_{4}=-1$. }
\end{cases}$
	
Then $r_{4}(K)=\begin{cases} 1, & \text{if $\left(\frac{ p}{q}\right)_{4}=1$, } \\
0, & \text{if $\left(\frac{ p}{q}\right)_{4}=-1$. }
\end{cases}$

Suppose that $q\equiv 3 \pmod 8$, then $\left(\frac{2}{q}\right)=-1. $\\
If $\left(\frac{y}{q}\right)=1$
\begin{center}
	\begin{tabular}{c||r}
		\hline\hline
		$\bullet$ If $  \bf { \left(\frac{ q}{p}\right)_{4}=1}$ & $\bullet$  If $\bf {\left(\frac{ q}{p}\right)_{4}=-1}$ \\
		\hline
		$ R_{K/k} = \begin{pmatrix}
		
		\scriptstyle 0 & \scriptstyle 1 & \scriptstyle 1 & \scriptstyle 0 & \scriptstyle 1 & \scriptstyle 1 \\
		\scriptstyle 1 & \scriptstyle 1 & \scriptstyle 1 & \scriptstyle 1 & \scriptstyle 1 & \scriptstyle 0\\
		\scriptstyle 1 & \scriptstyle 0 & \scriptstyle 1 & \scriptstyle 0 & \scriptstyle 1 & \scriptstyle 1 \\
		\scriptstyle 0 & \scriptstyle 0 & \scriptstyle 1 & \scriptstyle 0 & \scriptstyle 0 & \scriptstyle 1 \\
		\scriptstyle 0 & \scriptstyle 0 &\scriptstyle  0 & \scriptstyle 1 & \scriptstyle 1 & \scriptstyle 1 \\
		
		\end{pmatrix}, $
		& $R_{K/k} = \begin{pmatrix}
		
		\scriptstyle 0 & \scriptstyle 0 & \scriptstyle 1 & \scriptstyle 1 & \scriptstyle 1 & \scriptstyle 1\\
		\scriptstyle 0 & \scriptstyle 1 & \scriptstyle 1 & \scriptstyle 0 & \scriptstyle 1 & \scriptstyle 0 \\
		\scriptstyle 1 & \scriptstyle 0 & \scriptstyle 1 & \scriptstyle 0 & \scriptstyle 1 & \scriptstyle 1 \\
		\scriptstyle 1 & \scriptstyle 1 & \scriptstyle 1 & \scriptstyle 0 & \scriptstyle 0 & \scriptstyle 1 \\
		\scriptstyle 0 & \scriptstyle 0 & \scriptstyle 0 & \scriptstyle 1 & \scriptstyle 1 & \scriptstyle 1 \\
		
		\end{pmatrix}.$ \\
		
		\hline\hline
	\end{tabular}
\end{center}

If $\left(\frac{y}{q}\right)=-1$
\begin{center}
	\begin{tabular}{c||r}
		\hline\hline
		$\bullet$ If $  \bf { \left(\frac{ q}{p}\right)_{4}=1}$ & $\bf {\left(\frac{ q}{p}\right)_{4}=-1}$ \\
		\hline
		$ R_{K/k} = \begin{pmatrix}
		
		\scriptstyle 1 & \scriptstyle 1 & \scriptstyle 1 & \scriptstyle 1 & \scriptstyle 1 & \scriptstyle 0 \\
		\scriptstyle 1 & \scriptstyle 0 & \scriptstyle 1 & \scriptstyle 0 & \scriptstyle 1 & \scriptstyle 1 \\
		\scriptstyle 0 & \scriptstyle 1 & \scriptstyle 1 & \scriptstyle 0 & \scriptstyle 1 & \scriptstyle 1 \\
		\scriptstyle 0 & \scriptstyle 0 & \scriptstyle 1 & \scriptstyle 0 & \scriptstyle 0 & \scriptstyle 1 \\
		\scriptstyle 0 & \scriptstyle 0 &\scriptstyle  0 & \scriptstyle 1 & \scriptstyle 1 & \scriptstyle 1
		\end{pmatrix}, $
		& $R_{K/k} = \begin{pmatrix}
		
		\scriptstyle 1 & \scriptstyle 0 & \scriptstyle 1 & \scriptstyle 0 & \scriptstyle 1 & \scriptstyle 0\\
		\scriptstyle 0 & \scriptstyle 0 & \scriptstyle 1 & \scriptstyle 1 & \scriptstyle 1 & \scriptstyle 1 \\
		\scriptstyle 0 & \scriptstyle 1 & \scriptstyle 1 & \scriptstyle 0 & \scriptstyle 1 & \scriptstyle 1 \\
		\scriptstyle 1 & \scriptstyle 1 & \scriptstyle 1 & \scriptstyle 0 & \scriptstyle 0 & \scriptstyle 1 \\
		\scriptstyle 0 & \scriptstyle 0 & \scriptstyle 0 & \scriptstyle 1 & \scriptstyle 1 & \scriptstyle 1
		\end{pmatrix}. $  \\
		
		\hline\hline
	\end{tabular}
\end{center}
		
So $ \mathrm{rank}(R_{K/k})=4$  and thus  $ r_{4}(K)=0$.\\
Suppose that $q\equiv 7 \pmod 8$, then $\left(\frac{2}{q}\right)=1. $\\
If $\left(\frac{y}{q}\right)=1$
\begin{center}
	\begin{tabular}{c||r}
		\hline\hline
		$\bullet$ If $  \bf { \left(\frac{ q}{p}\right)_{4}=1}$ & $\bullet$  If $\bf {\left(\frac{ q}{p}\right)_{4}=-1}$ \\
		\hline
		$ R_{K/k} = \begin{pmatrix}
		
		\scriptstyle 0 & \scriptstyle 1 & \scriptstyle 0 & \scriptstyle 0 & \scriptstyle 1 & \scriptstyle 1 \\
		\scriptstyle 1 & \scriptstyle 1 & \scriptstyle 0 & \scriptstyle 1 & \scriptstyle 1 & \scriptstyle 0 \\
		\scriptstyle 1 & \scriptstyle 0 & \scriptstyle 1 & \scriptstyle 0 & \scriptstyle 1 & \scriptstyle 1  \\
		\scriptstyle 0 & \scriptstyle 0 & \scriptstyle 1 & \scriptstyle 0 & \scriptstyle 0 & \scriptstyle 1  \\
		\scriptstyle 0 & \scriptstyle 0 & \scriptstyle 0 & \scriptstyle 1 & \scriptstyle 1 & \scriptstyle 1  \\
		
		\end{pmatrix}, $
		& $R_{K/k} = \begin{pmatrix}
		
		\scriptstyle 0 & \scriptstyle 0 & \scriptstyle 0 & \scriptstyle 1 & \scriptstyle 1 & \scriptstyle 1 \\
		\scriptstyle 0 & \scriptstyle 1 & \scriptstyle 0 & \scriptstyle 0 & \scriptstyle 1 & \scriptstyle 0 \\
		\scriptstyle 1 & \scriptstyle 0 & \scriptstyle 1 & \scriptstyle 0 & \scriptstyle 1 & \scriptstyle 1  \\
		\scriptstyle 1 & \scriptstyle 1 & \scriptstyle 1 & \scriptstyle 0 & \scriptstyle 0 & \scriptstyle 1  \\
		\scriptstyle 0 & \scriptstyle 0 & \scriptstyle 0 & \scriptstyle 1 & \scriptstyle 1 & \scriptstyle 1  \\
		
		\end{pmatrix}. $ \\
		
		\hline\hline
	\end{tabular}
\end{center}

If $\left(\frac{y}{q}\right)=-1$
\begin{center}
	\begin{tabular}{c||r}
		\hline\hline
		$\bullet$ If $  \bf { \left(\frac{ q}{p}\right)_{4}=1}$ & $\bullet$  If $\bf {\left(\frac{ q}{p}\right)_{4}=-1}$ \\
		\hline
		$ R_{K/k} = \begin{pmatrix}
		
		\scriptstyle 1 & \scriptstyle 1 & \scriptstyle 0 & \scriptstyle 1 & \scriptstyle 1 & \scriptstyle 0 \\
		\scriptstyle 1 & \scriptstyle 0 & \scriptstyle 0 & \scriptstyle 0 & \scriptstyle 1 & \scriptstyle 1 \\
		\scriptstyle 0 & \scriptstyle 1 & \scriptstyle 1 & \scriptstyle 0 & \scriptstyle 1 & \scriptstyle 1  \\
		\scriptstyle 0 & \scriptstyle 0 & \scriptstyle 1 & \scriptstyle 0 & \scriptstyle 0 & \scriptstyle 1  \\
		\scriptstyle 0 & \scriptstyle 0 & \scriptstyle 0 & \scriptstyle 1 & \scriptstyle 1 & \scriptstyle 1  \\
		
		\end{pmatrix}, $
		& $R_{K/k} = \begin{pmatrix}
		
		\scriptstyle 1 & \scriptstyle 0 & \scriptstyle 0 & \scriptstyle 0 & \scriptstyle 1 & \scriptstyle 0 \\
		\scriptstyle 0 & \scriptstyle 0 & \scriptstyle 0 & \scriptstyle 1 & \scriptstyle 1 & \scriptstyle 1 \\
		\scriptstyle 0 & \scriptstyle 1 & \scriptstyle 1 & \scriptstyle 0 & \scriptstyle 1 & \scriptstyle 1  \\
		\scriptstyle 1 & \scriptstyle 1 & \scriptstyle 1 & \scriptstyle 0 & \scriptstyle 0 & \scriptstyle 1  \\
		\scriptstyle 0 & \scriptstyle 0 & \scriptstyle 0 & \scriptstyle 1 & \scriptstyle 1 & \scriptstyle 1  \\
		
		\end{pmatrix}. $ \\
		
		\hline\hline
	\end{tabular}
\end{center}

Consequently, $ \mathrm{rank}(R_{K/k})=\begin{cases} 4, & \text{if $\left(\frac{ q}{p}\right)_{4}=1$, } \\
3, & \text{if $\left(\frac{ q}{p}\right)_{4}=-1$. }
\end{cases}$

Then
$r_{4}(K)=\begin{cases} 0, & \text{if $\left(\frac{ q}{p}\right)_{4}=1$, } \\
1, & \text{if $\left(\frac{ q}{p}\right)_{4}=-1$. }
\end{cases}$
  \end{proof}

  \begin{example}
  	Keep  previous hypotheses and notations. Here we give examples when $q \equiv 1 \pmod 8$.
  	\begin{itemize}
  		\item [$ (i) $] For $p=173$ and $q=41$, we have $173\equiv5\pmod 8$ and $\left(\frac{173}{41}\right)=1$. From Corollary~((\ref{SHC})-(a)), we obtain $r_{2}(K)=2.$ Also $41\equiv 1 \pmod 8$ and $\left(\frac{173 }{41}\right)_{4}=-1 $, so the condition of  Theorem~( (\ref{Thb})-(1)) are satisfied. Hence,  $r_{4}(K) =0$, i.e., $ Cl_2(K)\cong \mathbb{Z}/2\mathbb{Z}\times \mathbb{Z}/2\mathbb{Z} $.
  		\item [$ (ii) $] For $p=101$ and $q=17$, we have $101\equiv5\pmod 8$ and $ \left(\frac{101}{17}\right)=1 $.  From Corollary((\ref{SHC})-(a)), we obtain $r_{2}(K) = 2$.
  		As $17\equiv 1 \pmod 8$ and $ \left(\frac{101 }{17}\right)_{4}=-\left(\frac{17}{101}\right)_{4}=1 $, so the condition of  Theorem~((\ref{Thb})-(1)) are satisfied. Hence, $r_{4}(K)=1$.
  		In fact, by Pari/GP we get $Cl_2(K)\cong \mathbb{Z}/8\mathbb{Z}\times \mathbb{Z}/2\mathbb{Z} $.
  		\item [$ (iii) $] For $p=157$ and $q=17$, we have $157\equiv5\pmod 8$ and $ \left(\frac{157}{17}\right)=1 $.  From Corollary((\ref{SHC})-(a)), we obtain $r_{2}(K)=2$. Also $17\equiv 1 \pmod 8$ and $\left(\frac{157 }{17}\right)_{4}=\left(\frac{17}{157}\right)_{4}=1$, so the condition of Theorem~((\ref{Thb})-(1)) are satisfied. Hence, $r_{4}(K) = 2$. Precisely, $ Cl_2(K)\cong \mathbb{Z}/8\mathbb{Z}\times \mathbb{Z}/4\mathbb{Z}$.
  	\end{itemize}	
  	
  \end{example}
  \begin{example}
  Now an example when $q\equiv 3\pmod 8$.
  	\begin{itemize}
  	\item [ ]For $ p =53 $ and $ q =43 $, we have $53\equiv5\pmod 8$ and $ \left(\frac{53}{43}\right)=1 $.  From Corollary((\ref{SHC})-(a)), we obtain $r_{2}(K) =2$, and as $43\equiv 3\pmod 8$, with the condition of Theorem~((\ref{Thb})-(2)), hence  $r_{4}(K) =0$. In other words, $ Cl_2(K)\cong \mathbb{Z}/2\mathbb{Z}\times \mathbb{Z}/2\mathbb{Z}$.
  	\end{itemize}
  \end{example}
  \begin{example}
  	The following two examples illustrate the 3rd case of Theorem \ref{Thb}.
  	\begin{itemize}
  		\item [$ (i) $] For $ p =269 $ and $ q =53 $, we have $269\equiv5\pmod 8$ and $ \left(\frac{269}{53}\right)=1 $.  The Corollary((\ref{SHC})-(a))  implies that, $r_{2}(K) =2$ and since the condition of Theorem~((\ref{Thb})-(3)) are satisfied, so,  $r_{4}(K) =0$, i.e., $ Cl_2(K)\cong \mathbb{Z}/2\mathbb{Z}\times \mathbb{Z}/2\mathbb{Z} $.
  		\item [$ (ii) $] For $p = 293$ and $q=109$, we can easily verify that $293\equiv5\pmod 8$ and $ \left(\frac{293}{109}\right)=1 $, $109\equiv 5 \pmod 8$ and  $\left(\frac{293 }{109}\right)_{4}=1$, then the Theorem~((\ref{Thb})-(3)) implies that $r_{4}(K) =1$,	exactly, $ Cl_2(K)\cong \mathbb{Z}/4\mathbb{Z}\times \mathbb{Z}/2\mathbb{Z} $.
  	\end{itemize}
  	
  \end{example}
  \begin{example}
  	We end with an example for the last case of  Theorem \ref{Thb}.
  	\begin{itemize}
  		\item [$ (i) $] For $ p =173 $ and $ q =23 $, we have $173\equiv5\pmod 8$ and $ \left(\frac{173}{23}\right)=1 $,  $23\equiv 7\pmod 8$ and  $ \left(\frac{23 }{173}\right)_{4}=1 $. According to  Corollary((\ref{SHC})-(a)) and Theorem~((\ref{Thb})-(4)), we have  $r_{4}(K) =0$.
  		\item [$ (ii) $] Similarly for $ p =149 $ and $ q =7 $, we have $149\equiv5\pmod 8$, $ \left(\frac{149}{7}\right)=1 $,  $7\equiv 7 \pmod 8$ and $\left(\frac{7 }{149}\right)_{4}=-1$, we find that $r_{4}(K) =1$. By Pari/GP, we find  $ Cl_2(K)\cong \mathbb{Z}/16\mathbb{Z}\times \mathbb{Z}/2\mathbb{Z} $.
  	
  	\end{itemize}
  	
  \end{example}
\subsection{Case:  $d=q_{1}q_{2} $ with $ q_{1}$, $q_{2} $ are distinct prime numbers and  $\left(\frac{p }{q_{1}}\right)=\left(\frac{p }{q_{2}}\right)=-1$}
Let $ p$,  $ q_{1}$,  $q_{2} $ be different odd prime numbers  such that $p\equiv 5 \pmod 8$,  $\left(\frac{p }{q_{1}}\right)=\left(\frac{p }{q_{2}}\right)=-1$.
Then the finite primes of $ k$ ramifying in $K$, in this case, are:  $\tilde{q_{1}} $, $\tilde{q_{2}} $, $ 2_{I}$ and $(\sqrt{p})$,  where $\tilde{q_{1}}=q_{1}O_{k}$, $\tilde{q_{2}}=q_{2}O_{k}$,   $pO_{k}=(\sqrt{p})^{2}$ and $2_{I}=2O_{k}$.
To compute the $4$-rank of the class group of $K$, by using the generalized R\'edei-matrix,  we need the following lemma:
\begin{lemma}
	Keeping previous hypotheses and notations, then	
	\begin{enumerate}[\rm a.]
		\item For $i=1, 2$ we have
		$\left(\frac{ \varepsilon_{p}, \, \delta }{\tilde{q_{i}}}\right)=\left(\frac{ -1 }{q_{i}}\right)$
			and $ \left(\frac{2, \, \delta }{\tilde{q_{i}}}\right)=\left(\frac{-1, \, \delta }{\tilde{q_{i}}}\right)=1$.	
		\item  $\left(\frac{q_{i}, \, \delta }{\tilde{q_{i}}}\right)=-\left(\frac{ -1 }{q_{i}}\right)$ and $ \left(\frac{q_{i}, \, \delta }{\tilde{q_{j}}}\right)=1$ for $ i\neq j\in\{ 1, 2\}$.
		
	\end{enumerate}		
\end{lemma}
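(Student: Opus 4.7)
The plan is to exploit two structural facts about the inert primes $\tilde{q_i}$. First, since the residue field $\mathcal{O}_k/\tilde{q_i}\cong\mathbb{F}_{q_i^{2}}$ has odd characteristic, I would invoke the tame Hilbert symbol formula
\[
\left(\frac{\alpha,\,\beta}{\tilde{q_i}}\right)=\left[\frac{(-1)^{ab}\,\alpha^{b}\beta^{-a}}{\tilde{q_i}}\right],\qquad a=v_{\tilde{q_i}}(\alpha),\ b=v_{\tilde{q_i}}(\beta),
\]
which reduces every computation to a quadratic residue symbol of a unit in the residue field. Second, I would record the elementary fact that $|\mathbb{F}_{q_i}^{\times}|=q_i-1$ divides $(q_i^{2}-1)/2=(q_i-1)(q_i+1)/2$, so every element of $\mathbb{F}_{q_i}^{\times}$ is automatically a square in $\mathbb{F}_{q_i^{2}}^{\times}$; consequently $\left[\frac{a}{\tilde{q_i}}\right]=1$ whenever $a\in\mathbb{Z}$ is coprime to $q_i$. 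In particular $\left[\frac{-1}{\tilde{q_i}}\right]=\left[\frac{2}{\tilde{q_i}}\right]=\left[\frac{q_j}{\tilde{q_i}}\right]=1$ for $j\neq i$. Note also that $v_{\tilde{q_i}}(\delta)=1$ since $\delta=q_{1}q_{2}\sqrt{p}$ contains exactly one factor of $q_i$.

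For part (a), the elements $-1$, $2$, and $\varepsilon_p$ are all units at $\tilde{q_i}$, so the tame formula collapses to $\left(\frac{\alpha,\,\delta}{\tilde{q_i}}\right)=\left[\frac{\alpha}{\tilde{q_i}}\right]$. The preliminary observation immediately handles $\alpha=-1$ and $\alpha=2$, giving the value $1$. For $\alpha=\varepsilon_p$, I would compute $\overline{\varepsilon_p}^{(q_i^{2}-1)/2}$ in $\mathbb{F}_{q_i^{2}}^{\times}$ using that the Frobenius $x\mapsto x^{q_i}$ realises the nontrivial Galois action on $\mathbb{F}_{q_i^{2}}/\mathbb{F}_{q_i}$; this gives $\overline{\varepsilon_p}^{q_i+1}=\overline{\varepsilon_p\,\varepsilon_p'}=\overline{-1}$ since $N_{k/\mathbb{Q}}(\varepsilon_p)=-1$ (valid for $p\equiv5\pmod 8$). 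Raising to the $(q_i-1)/2$-th power then yields $(-1)^{(q_i-1)/2}=\left(\frac{-1}{q_i}\right)$, as desired.

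For part (b), the symbol $\left(\frac{q_i,\,\delta}{\tilde{q_j}}\right)$ with $i\neq j$ is immediate: $q_i$ is a unit at $\tilde{q_j}$, so the tame formula gives $\left[\frac{q_i}{\tilde{q_j}}\right]=1$ by the preliminary observation. For $\left(\frac{q_i,\,\delta}{\tilde{q_i}}\right)$, $q_i$ is a uniformizer at $\tilde{q_i}$; writing $\delta=q_i\cdot(q_j\sqrt{p})$ and applying the formula with $a=b=1$, $u=1$, $w=q_j\sqrt{p}$, the symbol reduces to $\left[\frac{-(q_j\sqrt{p})^{-1}}{\tilde{q_i}}\right]=\left[\frac{\sqrt{p}}{\tilde{q_i}}\right]$ once the trivial factors coming from $-1$ and $q_j$ are discarded (and using $(\pm 1)^{-1}=\pm 1$). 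The decisive computation is therefore $\left[\frac{\sqrt{p}}{\tilde{q_i}}\right]$: since $(q_i^{2}-1)/2$ is even (one of $q_i\pm 1$ is divisible by $4$), one obtains $(\sqrt{p})^{(q_i^{2}-1)/2}=p^{(q_i^{2}-1)/4}=\left(\frac{p}{q_i}\right)^{(q_i+1)/2}=(-1)^{(q_i+1)/2}$ in $\mathbb{F}_{q_i}$, using the hypothesis $\left(\frac{p}{q_i}\right)=-1$. A short parity check then gives $(-1)^{(q_i+1)/2}=-(-1)^{(q_i-1)/2}=-\left(\frac{-1}{q_i}\right)$, closing the argument.

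The main obstacle is the parity bookkeeping needed to express $\left[\frac{\sqrt{p}}{\tilde{q_i}}\right]$ in terms of $\left(\frac{-1}{q_i}\right)$; everything else is a direct application of the tame formula combined with the observation that rational units are automatic squares in the inert residue field $\mathbb{F}_{q_i^{2}}$.
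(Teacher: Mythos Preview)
Your argument is correct and complete. The route, however, differs from the paper's. The paper invokes the norm formula for the quadratic residue symbol at an inert prime (Lemmermeyer, Prop.~4.2), namely $\left[\frac{\alpha}{\tilde q_i}\right]=\left(\frac{N_{k/\mathbb Q}(\alpha)}{q_i}\right)$, which instantly gives $\left[\frac{-1}{\tilde q_i}\right]=\left(\frac{1}{q_i}\right)=1$, $\left[\frac{2}{\tilde q_i}\right]=\left(\frac{4}{q_i}\right)=1$, $\left[\frac{\varepsilon_p}{\tilde q_i}\right]=\left(\frac{-1}{q_i}\right)$, $\left[\frac{q_j}{\tilde q_i}\right]=\left(\frac{q_j^{2}}{q_i}\right)=1$; for the self-symbol they use the identity $\left(\frac{q_i,\,-q_i}{\tilde q_i}\right)=1$ to reduce to $\left[\frac{-q_j\sqrt p}{\tilde q_i}\right]=\left(\frac{-p}{q_i}\right)=-\left(\frac{-1}{q_i}\right)$. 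You instead bypass the cited norm formula entirely and work directly in the residue field $\mathbb F_{q_i^{2}}$: the observation $(q_i-1)\mid (q_i^{2}-1)/2$ kills all rational units at once, the Frobenius identification $\overline{\varepsilon_p}^{\,q_i}=\overline{\varepsilon_p'}$ handles the fundamental unit, and the explicit exponent bookkeeping for $(\sqrt p)^{(q_i^{2}-1)/2}$ handles $\sqrt p$. Your approach is more self-contained and in fact re-derives the special cases of Lemmermeyer's proposition needed here; the paper's approach is shorter because it outsources that step to the reference.
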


\begin{proof}We have
\begin{enumerate}[\rm a.]
		\item
		\begin{itemize}
			\item [$ \bullet $] $  \left(\frac{-1, \, \delta }{\tilde{q_{1}}}\right)=\left[\frac{ -1 }{\tilde{q_{1}}}\right]=\left(\frac{ 1 }{q_{1}}\right)=1$.	
			\item [$ \bullet $] As $N_{k/\mathbb{Q}}(\varepsilon_{p}) = -1$, so by (\cite[{Ch 4 \textsection} 4.1 Proposition 4.2, p. 112]{L00}), \\
			$\left(\frac{ \varepsilon_{p}, \, \delta }{\tilde{q_{1}}}\right)= \left[\frac{  \varepsilon_{p} }{\tilde{q_{1}}}\right]
			=\left(\frac{ N_{k/\mathbb{Q}}(\varepsilon_{p}) }{q_{1}}\right)=\left(\frac{ -1}{q_{1}}\right)$,
			and $\left(\frac{ \varepsilon_{p}, \, \delta }{\tilde{q_{2}}}\right)=\left(\frac{ -1}{q_{2}}\right)$.\\
			By the same argument, we get $ \left(\frac{-1, \, \delta }{\tilde{q_{2}} }\right)=\left[\frac{ -1 }{\tilde{q_{2}}}\right]=\left(\frac{1 }{q_{2}}\right) =1$.
			
			\item [$ \bullet $]	 As $N_{k/\mathbb{Q}}(2) =4$, so by  (\cite[{Ch 4 \textsection} 4.1 Proposition 4.2, p. 112]{L00}), $ \left(\frac{2, \, \delta }{\tilde{q_{1}}}\right)=\left[\frac{ 2 }{\tilde{q_{1}}}\right]=1$.
			Similarly,  we have
			$ \left(\frac{2, \, \delta }{\tilde{q_{2}}}\right)=1$.	
		\end{itemize}		
		
		\item
		$
		\left(\frac{q_{1}, \, \delta }{\tilde{q_{2}}}\right)= \left[\frac{ q_{1}}{\tilde{q_{2}}}\right]=\left(\frac{ q_{1}^{2}}{q_{2}}\right)=1,
		$
		and\\
		$ \begin{array}{rcl}
		\left(\frac{q_{1}, \, \delta }{\tilde{q_{1}}}\right)
		&=&\left(\frac{q_{1},\, -q_{1} }{\tilde{q_{1}}}\right)\left(\frac{q_{1},\, -q_{2}\sqrt{p}  }{\tilde{q_{1}}}\right)
		=\left(\frac{q_{1},\, -q_{2}\sqrt{p}  }{\tilde{q_{1}}}\right)
		=\left[\frac{  -q_{2}\sqrt{p}}{\tilde{q_{1}}}\right]\\
		&=&\left[\frac{  -\sqrt{p}}{\tilde{q_{1}}}\right]
		=\left(\frac{-1}{q_{1}}\right)\left(\frac{p}{q_{1}}\right)
		=-\left(\frac{-1}{q_{1}}\right).
		\end{array}	$	\\
		By the same argument, we get
		$\left(\frac{q_{2}, \, \delta }{\tilde{q_{1}}}\right)=1$	and $ \left(\frac{q_{2}, \, \delta }{\tilde{q_{2}}}\right)=-\left(\frac{ -1 }{q_{2}}\right)$.
\end{enumerate}	
	
\end{proof}

\begin{remark}
 By similar calculations as in Lemma $(\ref{SHL2})$,  we get
  \begin{center}$ \left(\frac{q_{1}, \, \delta }{\sqrt{p}}\right)=\left(\frac{q_{2}, \, \delta }{\sqrt{p}}\right)=-1  $ and $ \left(\frac{q_{1}, \, \delta }{p_{\infty}}\right)=\left(\frac{q_{2}, \, \delta }{p_{\infty}}\right)=1.$\end{center}
		 The symbols $\left(\frac{-, \, \delta }{2_{I}}\right) $ can be computed by using product formula  except for  $ \left(\frac{\sqrt{p}, \, \delta }{\mathcal{P}_{i}}\right) $ which can be calculated by the formula $\left(\frac{\sqrt{p}, \, \delta }{\mathcal{P}_{i}}\right)=\left(\frac{-1, \, \delta }{\mathcal{P}_{i}}\right)\left(\frac{q_{1}, \, \delta }{\mathcal{P}_{i}}\right)\left(\frac{q_{2}, \, \delta }{\mathcal{P}_{i}}\right) $ consequence of $ \left(\frac{-\delta, \, \delta }{\mathcal{P}_{i}}\right)=1 $ for $ \mathcal{P}_{i}=\tilde{q_{1}},  \tilde{q_{2}} $, $ 2_{I}, (\sqrt{p})\text{ or }p_{\infty}$.\\
If $ d=q_{1}q_{2} $,  $\left(\frac{p }{q_{1}}\right)=\left(\frac{p }{q_{2}}\right)=-1$, then we can summarise the  results in the following table:
	\begin{center}
		\begin{tabular}{||c||c|c|c|c|c||}
			\hline\hline
			
			\diagbox{element}{prime} & $ \tilde{q_{1}} $ & $\tilde{q_{2}} $ & $2_{I} $ & $ (\sqrt{p})$& $ p_{\infty} $\\
			\hline
			$ q_{1}$ & $ -\left(\frac{ -1 }{q_{1}}\right) $  & 1& $ \left(\frac{ -1 }{q_{1}}\right) $ & -1 & 1 \\
			\hline
			$ q_{2}$ & 1 &$ -\left(\frac{ -1 }{q_{2}}\right) $ & $\left(\frac{ -1 }{q_{2}}\right) $ & -1  & 1 \\
			\hline
			$ 2 $ & 1 & 1 &-1 &-1 &1 \\
			\hline
			$\sqrt{p}$  & $-\left(\frac{ -1 }{q_{1}}\right) $ & $-\left(\frac{ -1 }{q_{2}}\right) $ & $ -\left(\frac{ -1 }{d}\right) $ &1&-1 \\			
			\hline
			-1 & 1& 1 & -1 & 1 &  -1\\
			\hline
			$ \varepsilon_{p} $ &  $ \left(\frac{ -1 }{q_{1}}\right) $ & $ \left(\frac{ -1 }{q_{2}}\right) $ & $ \left(\frac{ -1 }{d}\right) $ & -1 & -1 \\
			\hline\hline   			  			  			
		\end{tabular}
	\end{center}	
	
\end{remark}

\begin{proof}[\rm\textbf{Proof of Theorem C}]
	If $ d=q_{1}q_{2} $ with $ q_{1}$,  $q_{2} $ are two different prime numbers such that $\left(\frac{p }{q_{1}}\right)=\left(\frac{p }{q_{2}}\right)=-1$,  by corollary  $ (\ref{SHC})$ we have $ r_{2}(K)=2 $.
	As the finite primes of $ k $ which ramify in $K$ are: 	$\tilde{q_{1}}$, $\tilde{q_{2}}$, $2_{I}$, $(\sqrt{p})$
	where  $\tilde{q_{1}}=q_{1}O_{k}$, $\tilde{q_{2}}=q_{2}O_{k} $,  $2_{I}=2O_{k}$ and $(\sqrt{p})^{2}=pO_{k}$.
	So there exist prime ideals $\mathcal{H}_{1}, \mathcal{H}_{2}, \mathcal{H}_{3}, \mathcal{H}_{4}$ of $K$ such that
	$$\mathcal{H}_{1}^{2}=\tilde{q}_{1}O_{K}, \mathcal{H}_{2}^{2}=\tilde{q}_{2}O_{K}, \mathcal{H}_{3}^{2}=2_{I}O_{K},  \mathcal{H}_{4}^{2}=\sqrt{p}O_{K}.  $$	
	Thus~\
	\begin{itemize}
		\item  $\mathcal{H}_{1}\sigma(\mathcal{H}_{1})=\mathcal{H}_{1}\mathcal{H}_{1}O_{K}=\tilde{q_{1}}O_{K}=q_{1}O_{K}, $
		\item  $\mathcal{H}_{2}\sigma(\mathcal{H}_{2})=\mathcal{H}_{2}\mathcal{H}_{2}O_{K}=\tilde{q_{2}}O_{K}=q_{2}O_{K}, $
		\item  $\mathcal{H}_{3}\sigma(\mathcal{H}_{3})=\mathcal{H}_{3}\mathcal{H}_{3}O_{K}=2_{I}O_{K}=2O_{K}, $
		\item  $\mathcal{H}_{4}\sigma(\mathcal{H}_{4})=\mathcal{H}_{4}\mathcal{H}_{4}O_{K}=\sqrt{p}O_{K}=\sqrt{p}O_{K}. $
	\end{itemize}
	where $\sigma$ is the generateur of the Galois group of $K$ over $k$.\\
	Moreover,  $E_{k}=\langle-1, \varepsilon_{p}\rangle$ and $ -1$, $\varepsilon_{p}$, $-\varepsilon_{p}\notin E_{k}\cap N_{K/k}(K) $ (Lemma \ref{SHL1}).
	Then the $ 5\times(4 + 2)  $ generalized R\'edei's matrix   $ R_{K/k} $ is  	
	$$ R_{K/k} = \begin{pmatrix}
	\scriptstyle \left(\frac{q_{1}, \, \delta }{\tilde{q}_{1}}\right)& \scriptstyle \left(\frac{q_{2},\, \delta }{\tilde{q}_{1}}\right)& \scriptstyle \left(\frac{ 2,\, \delta }{\tilde{q}_{1}}\right)& \scriptstyle \left(\frac{ \sqrt{p}, \, \delta }{\tilde{q}_{1}}\right)&
	\scriptstyle \left(\frac{ -1, \, \delta }{\tilde{q}_{1}}\right)&
	\scriptstyle \left(\frac{ \varepsilon_{p}, \, \delta }{\tilde{q}_{1}}\right) \\
	\scriptstyle \left(\frac{q_{1}, \, \delta }{\tilde{q}_{2}}\right)& \scriptstyle \left(\frac{q_{2}, \, \delta }{\tilde{q}_{2}}\right)& \scriptstyle \left(\frac{ 2, \, \delta }{\tilde{q}_{2}}\right)& \scriptstyle \left(\frac{ \sqrt{p}, \, \delta }{\tilde{q}_{2}}\right)& \scriptstyle \left(\frac{ -1, \, \delta }{\tilde{q}_{2}}\right)& \scriptstyle \left(\frac{\varepsilon_{p}, \, \delta }{\tilde{q}_{2}}\right)  \\
	\scriptstyle \left(\frac{q_{1}, \, \delta }{2_{I}}\right)&
	\scriptstyle \left(\frac{ q_{2}, \, \delta }{2_{I}}\right) &
	\scriptstyle \left(\frac{ 2, \, \delta }{2_{I}}\right)&
	\scriptstyle \left(\frac{ \sqrt{p}, \, \delta }{2_{I}}\right) & \scriptstyle \left(\frac{ -1, \, \delta }{2_{I}}\right) &
	\scriptstyle \left(\frac{\varepsilon_{p}, \, \delta }{2_{I}}\right) \\
	\scriptstyle \left(\frac{q_{1}, \, \delta }{\sqrt{p}}\right)&
	\scriptstyle \left(\frac{ q_{2}, \, \delta }{\sqrt{p}}\right)& \scriptstyle \left(\frac{ 2, \, \delta }{\sqrt{p}}\right)& \scriptstyle \left(\frac{ \sqrt{p}, \, \delta }{\sqrt{p}}\right)&
	\scriptstyle \left(\frac{ -1, \, \delta }{\sqrt{p}}\right)& \scriptstyle \left(\frac{ \varepsilon_{p},\, \delta }{\sqrt{p}}\right)\\ 	
	\scriptstyle \left(\frac{ q_{1}, \, \delta }{p_{\infty}}\right)& \scriptstyle \left(\frac{q_{2}, \, \delta }{p_{\infty}}\right)& \scriptstyle \left(\frac{ 2, \, \delta }{p_{\infty}}\right)& \scriptstyle \left(\frac{ \sqrt{p}, \, \delta }{p_{\infty}}\right)& \scriptstyle \left(\frac{ -1, \, \delta }{p_{\infty}}\right)& \scriptstyle \left(\frac{\varepsilon_{p}, \, \delta }{p_{\infty}}\right) \\	
	
	\end{pmatrix}.  $$
	We consider the above matrix with coefficients in $ \mathbb{F}_{2} $ by replacing  $ 1 $ by $ 0 $ and  $ -1 $ by $ 1 $.
	\begin{center}
		\begin{tabular}{c||r}
			\hline\hline
			$\bullet$ If $(q_{1}, q_{2})\equiv (1, 1) \pmod 4$ & $\bullet$  If $(q_{1}, q_{2})\equiv (1, 3) \pmod 4$ \\
			\hline
			$ R_{K/k} = \begin{pmatrix}
			\scriptstyle 1 & \scriptstyle 0 & \scriptstyle 0 & \scriptstyle 1 & \scriptstyle 0 & \scriptstyle 0 \\
			\scriptstyle 0 & \scriptstyle 1 & \scriptstyle 0 & \scriptstyle 1 & \scriptstyle 0 & \scriptstyle 0  \\
			\scriptstyle 0 & \scriptstyle 0 & \scriptstyle 1 & \scriptstyle 1 & \scriptstyle 1 & \scriptstyle 0  \\
			\scriptstyle 1 & \scriptstyle 1 & \scriptstyle 1 & \scriptstyle 0 & \scriptstyle 0 & \scriptstyle 1  \\
			\scriptstyle 0 & \scriptstyle 0 &\scriptstyle 0 & \scriptstyle 1 & \scriptstyle 1 & \scriptstyle 1  \\
			
			\end{pmatrix} $
			& $R_{K/k} = \begin{pmatrix}
			\scriptstyle 1 & \scriptstyle 0 & \scriptstyle 0  & \scriptstyle 1 & \scriptstyle 0 & \scriptstyle 0 \\
			\scriptstyle 0 & \scriptstyle 0 & \scriptstyle 0 & \scriptstyle 0 & \scriptstyle 0 & \scriptstyle 1  \\
			\scriptstyle 0 & \scriptstyle 1 & \scriptstyle 1 & \scriptstyle 0 & \scriptstyle 1 & \scriptstyle 1  \\
			\scriptstyle 1 & \scriptstyle 1 & \scriptstyle 1 & \scriptstyle 0 & \scriptstyle 0 & \scriptstyle 1  \\
			\scriptstyle 0 & \scriptstyle 0 &\scriptstyle 0 & \scriptstyle 1 & \scriptstyle 1 & \scriptstyle 1  \\
			
			\end{pmatrix} $  \\
			
			\hline\hline
		\end{tabular}
	\end{center}
	and
	\begin{center}
		\begin{tabular}{c||r}
			\hline\hline
			$\bullet$ If $(q_{1}, q_{2})\equiv (3, 1) \pmod 4 $ & $\bullet$  If $(q_{1}, q_{2})\equiv (3, 3) \pmod 4$ \\
			\hline
			$ R_{K/k} = \begin{pmatrix}
			\scriptstyle 0 & \scriptstyle 0 & \scriptstyle 0  & \scriptstyle 0 & \scriptstyle 0 & \scriptstyle 1 \\
			\scriptstyle 0 & \scriptstyle 1 & \scriptstyle 0 & \scriptstyle 1 & \scriptstyle 0 &\scriptstyle 0  \\
			\scriptstyle 1 & \scriptstyle 0 & \scriptstyle 1 & \scriptstyle 0 & \scriptstyle 1 & \scriptstyle 1  \\
			\scriptstyle 1 & \scriptstyle 1 & \scriptstyle 1 & \scriptstyle 0 & \scriptstyle 0 & \scriptstyle 1  \\
			\scriptstyle 0 & \scriptstyle 0 &\scriptstyle 0 & \scriptstyle 1 & \scriptstyle 1 & \scriptstyle 1  \\
			
			\end{pmatrix} $
			& $R_{K/k} = \begin{pmatrix}
			\scriptstyle 0 & \scriptstyle 0 & \scriptstyle 0 & \scriptstyle 0 & \scriptstyle 0 & \scriptstyle 1 \\
			\scriptstyle 0 & \scriptstyle 0 & \scriptstyle 0 & \scriptstyle 0 & \scriptstyle 0 & \scriptstyle 1 \\
			\scriptstyle 1 & \scriptstyle 1 & \scriptstyle 1 & \scriptstyle 1 & \scriptstyle 1 & \scriptstyle 0 \\
			\scriptstyle 1 & \scriptstyle 1 & \scriptstyle 1 & \scriptstyle 0 & \scriptstyle 0 & \scriptstyle 1  \\
			\scriptstyle 0 & \scriptstyle 0 &\scriptstyle 0 & \scriptstyle 1 & \scriptstyle 1 & \scriptstyle 1
			\end{pmatrix}. $  \\
			
			\hline\hline
		\end{tabular}
	\end{center}

  So 	$$ \mathrm{rank}(R_{K/k})=\begin{cases} 3, & \text{if $(q_{1}, q_{2})\equiv (3, 3) \pmod 4 $, } \\
	4, & \text{otherwise.}
  \end{cases}$$
	Then
	$$ \hspace{3em} r_{4}(K)=\begin{cases} 1, & \text{if $(q_{1}, q_{2})\equiv (3, 3) \pmod 4 $, } \\
	0, & \text{otherwise.}
	\end{cases}$$	
	\end{proof}
	\begin{example}
	We finish this work by illustrating  the last Theorem with some examples:
		\begin{itemize}
			\item [$ (i) $] For $p =5$, $q_{1} =3$ and $q_{2} =7$, we have $p=5\equiv5\pmod 8$ and $ \left(\frac{5}{3}\right)=\left(\frac{5}{7}\right)=-1$.  From Corollary~((\ref{SHC})-(b)), we obtain $r_{2}(K) =2$. As $(q_{1}, q_{2})=(3, 7)\equiv (3, 3) \pmod 4 $, then, $r_{4}(K)=1$. In fact, $ Cl_2(K)\cong \mathbb{Z}/4\mathbb{Z}\times \mathbb{Z}/2\mathbb{Z} $.
			\item [$(ii)$] Likewise if $ p =13 $, $ q_{1} =19 $ and $ q_{2} =83 $, we obtain $r_{2}(K) =2$ and $r_{4}(K)=1$, but this time, $ Cl_2(K)\cong \mathbb{Z}/16\mathbb{Z}\times \mathbb{Z}/2\mathbb{Z} $.

\noindent  For the case $(q_1, q_2)\not\equiv (3, 3) \pmod 4 $, we have the following examples:
			\item [$ (iii) $] Take $p=37$, $q_{1}=17$ and $q_{2}=29$, we have $p=37\equiv5\pmod 8$ and $ \left(\frac{37}{17}\right)=\left(\frac{37}{29}\right)=-1$.
 From Corollary~((\ref{SHC})-(b)), we obtain $r_{2}(K) =2$. Also $(q_{1}, q_{2})=(17, 29)\equiv (1, 1) \pmod 4 $, so the condition of Theorem~(\ref{Thc}) are satisfied. Hence,  $r_{4}(K)=0$ and $ Cl_2(K)\cong \mathbb{Z}/2\mathbb{Z}\times \mathbb{Z}/2\mathbb{Z}$.
			\item [$(iv)$] For $ p =37 $, $ q_{1} =17 $, $ q_{2} =31 $, we have $p=37\equiv5\pmod 8$, $ \left(\frac{37}{17}\right)=\left(\frac{37}{31}\right)=-1$ and $(q_{1}, q_{2})=(17, 31)\equiv (1, 3) \pmod 4$. we also find $ Cl_2(K)\cong \mathbb{Z}/2\mathbb{Z}\times \mathbb{Z}/2\mathbb{Z}$.
		\end{itemize}
	\end{example}	

\section*{Acknowledgment}
We would like to thank the unknown referee  for his/her several helpful suggestions and for calling our attention
to the missing details.

\end{document}